\documentclass[a4paper,12pt]{article}
\usepackage{relsize}
\usepackage{cite}
\usepackage{color}
\usepackage{mathtools}
\usepackage[dvipsnames]{xcolor}

\usepackage{enumitem}

\usepackage{amsmath,amsthm,amssymb,mathrsfs}

\usepackage[margin=2.5cm]{geometry}

\usepackage{graphicx}
\usepackage[T1]{fontenc}
\usepackage{authblk}
\usepackage[english]{babel}
\usepackage{stackrel}

\usepackage{array,booktabs,longtable}
\theoremstyle{plain}
\newtheorem{theorem}{Theorem}[section]
\newtheorem{lemma}{Lemma}[section]
\newtheorem{corollary}{Corollary}[section]

\theoremstyle{remark}
\newtheorem{remark}{Remark}[section]
\newtheorem*{remark*}{Remark}
\newtheorem{question}{Question}[section]

\numberwithin{equation}{section} 

\newcommand{\F}{\mathbb F}

\newcommand{\Z}{\mathbb Z}
\newcommand{\one}{\mathbf 1}

\title{{\bf Proofs and disproofs of some conjectures on zero-sum problems of new types}}

\author{Yang Li\thanks{Yang Li is with the School of Physical and Mathematical Sciences,
Nanyang Technological University, 21 Nanyang Link,
Singapore 637371, Singapore (email:~ yanglimath@163.com).},~
San Ling\thanks{San Ling is with the School of Physical and Mathematical Sciences,
Nanyang Technological University, 21 Nanyang Link,
Singapore 637371, Singapore (email:~ lingsan@ntu.edu.sg). 
He is also with VinUniversity, Vinhomes Ocean Park, Gia Lam,
Hanoi 100000, Vietnam (email:~ ling.s@vinuni.edu.vn).},~
Zhonghua Sun\thanks{Zhonghua Sun is with the School of Mathematics, Hefei University of Technology,
Hefei 230601, Anhui, China (email:~ sunzhonghuas@163.com).},~
Shixin Zhu\thanks{Shixin Zhu is with the School of Mathematics, Hefei University of Technology,
Hefei 230601, Anhui, China (email:~ zhushixin@hfut.edu.cn).}
}

\begin{document}
\maketitle

\begin{abstract}
For integers $r\ge1$ and $n\ge2$, let $s_r(n)$ be the least $N$
such that every sequence of $N$ vectors in $\Z^r$, each nonzero
modulo $n$, contains $n$ terms whose sum is zero modulo $n$
but nonzero modulo $n^2$.
Let $w_r(n)$ be the corresponding least $N$ when any nonempty
subsequence is allowed.
These constants were introduced as zero-sum problems of new types by Z.-W. Sun.  
We prove $s_r(n)\ge(2^{r+1}-2)(n-1)+1$ by constructing an explicit sequence from a lattice polytope, 
\emph{disproving} his conjecture $s_2(n)=4n+1$ for $n\ge4$.
We obtain $w_r(2)=2r+1$ by classifying all sequences avoiding
such nonempty subsequences modulo $4$.
We also \emph{prove} Sun's conjecture $w_2(n)=4n-3$ for
$n=2^k3^\ell p$, where $p$ is a prime and $k,\ell\ge0$ are integers, 
by classifying the extremal sequences of length $4p-4$
modulo $p^2$ when $n=p$.
For every fixed $n\ge2$, we prove 
$w_r(n)/s(\Z_n^r)=O_n(r^2/2^r)\to0$ as $r\to\infty$,
where $s(\Z_n^r)$ is the classical Erd\H{o}s-Ginzburg-Ziv
constant, with an explicit threshold on $r$ for strict inequality.
Thus, for every fixed modulus $n\ge2$, Sun's proposed equality fails 
for sufficiently large $r$.
\end{abstract}

\noindent\textbf{Keywords:~} Zero-sum sequence of new type, 
sequence classification, extremal sequence, lattice polytope, 
Erd\H{o}s-Ginzburg-Ziv constant, 


\section{Introduction}

Let $r\ge1$ and $n\ge2$ be integers, and let
$A=\mathbf{a}_1\cdots \mathbf{a}_N$ be a sequence of length $N$ in $\Z^r$,
where $\Z$ denotes the set of integers.
Put $\Z_n=\Z/n\Z$.
For a prime $p$, let $\F_p=\Z_p$ denote the field with $p$ elements.
Write $G=\Z_n^r$, a finite abelian group of \emph{rank} $r$,
and $[m]=\{1,\ldots,m\}$ for $m\ge1$.
All vector congruences are coordinatewise, and $\mathbf{0}$
denotes a zero vector of an appropriate length.
Recall that the classical Erd\H{o}s-Ginzburg-Ziv constant $s(G)$
is the least $N$ such that every sequence over $G$ of length $N$
contains a zero-sum subsequence of length $n$ \cite{GG06}.
In terms of integer vectors, this requires an index set
$I\subseteq[N]$ with $|I|=n$ satisfying
\begin{equation}\label{eq:egz}
 \sum_{i\in I}\mathbf{a}_i\equiv\mathbf{0}\pmod n.
\end{equation}
The well-known Erd\H{o}s-Ginzburg-Ziv theorem \cite{EGZ61} gives $s(\Z_n)=2n-1$ for $r=1$,
while Kemnitz's conjecture \cite{K83}, proved by Reiher \cite{R07}, gives $s(\Z_n^2)=4n-3$ for $r=2$.
For $r\ge3$, bounds for $s(\Z_n^r)$ have been deeply studied
in~\cite{EEGKR07,H18,N20}.

Note that the condition in \eqref{eq:egz} depends only on residues modulo $n$
and does not determine whether the selected sum also vanishes modulo $n^2$.
For prime moduli, Gao et al.~\cite{GHJLW25}
studied exact divisibility conditions for zero-sum subsequences.
For a prime $p$, the \emph{$p$-adic valuation} of an integer $x$ is
$v_p(x)=\sup\{k\in\Z:~k\ge 0\text{ and }p^k\mid x\}$.
For $r=1$ and $n=p$, their constant $s(p,1)$ coincides with
the least length such that every integer sequence of that length whose terms
are not divisible by $p$ contains a $p$-term subsequence
whose sum has $p$-adic valuation exactly $1$.
Equivalently, the selected sum must vanish modulo $p$
but not modulo $p^2$. 
Recently, it was shown in \cite{GJLLY26} that $s(p,1)\le3p-2$
for every prime $p$, improving the bound
$s(p,1)\le s(\Z_p^2)=4p-3$ obtained from Kemnitz's theorem \cite{K83,R07}.

Very recently, Sun~\cite{S26} generalized the above rank-one prime-modulus problem
of Gao et al. \cite{GHJLW25} to arbitrary ranks $r\ge1$ and moduli $n\ge 2$
with respect to prescribed or unrestricted subsequence lengths.
We call a sequence $A$ \emph{admissible} if
$\mathbf{a}_i\not\equiv\mathbf{0}\pmod n$ for each $i$.
Following Sun's Definitions~1.2 and~1.3,
let $w_r(n)$ be the least $N$ such that every admissible $A$
of length $N$ has a nonempty $I\subseteq[N]$ satisfying
\begin{equation}\label{eq:required-sum}
 \sum_{i\in I}\mathbf{a}_i\equiv\mathbf{0}\pmod n,
 \quad\text{but}\quad
 \sum_{i\in I}\mathbf{a}_i\not\equiv\mathbf{0}\pmod{n^2}.
\end{equation}
Let $s_r(n)$ be the corresponding least length when $|I|=n$
is additionally required.
The problems of determining and bounding $s_r(n)$ and $w_r(n)$
for arbitrary $r\ge1$ and $n\ge2$ are referred to as \emph{zero-sum problems of new types}.

By definition, we immediately have $w_r(n)\le s_r(n)$ and $s_1(p)=s(p,1)$.
For $n>2$, Sun proved that $2rn+1\le s_r(n)\le n(n^r-1)+1$ in \cite[Theorem~1.3]{S26}.
For $r=2$, Sun conjectured in the same paper that
$w_2(n)=4n-3$ for every $n>1$ in Conjecture~1.1(ii)
and $s_2(n)=4n+1$ for every $n>2$ in Conjecture~1.2.
In Remark~1.2, he also asked whether $w_r(n)=s(\Z_n^r)$
for all integers $n>1$ and $r>0$.
More related constants and conjectures can also be found in \cite{GHJLW25,S26}.

This paper aims to study the zero-sum problems of new types and
to answer Sun's conjectures and questions listed above.
In the following, we call an admissible sequence \emph{bad}
if no $n$-term subsequence satisfies~\eqref{eq:required-sum}, and
\emph{$w$-bad} if no nonempty subsequence does.
Such sequences give lower bounds for $s_r(n)$ and $w_r(n)$, respectively.
We write $|A|$ for the length of $A$,
$\sigma(A)$ for the sum of its terms in the ambient abelian group, and
$\mathbf{v}^{[m]}$ for $m$ copies of $\mathbf{v}$.
In addition, sequence products mean concatenation and an empty product denotes the empty sequence.

Our first main result addresses Sun's fixed-length conjecture 
through a construction from lattice polytopes. 

\begin{theorem}\label{thm:geometric}
Let $r\ge1$ and $n>1$.
For the standard basis $\mathbf{e}_1,\ldots,\mathbf{e}_r$ of $\Z^r$, the admissible sequence
\begin{align}\label{eq.A_rn}
A_{r,n}=\prod_{\mathbf{v}\in V_r}\mathbf{v}^{[n-1]},~{\rm with}~
V_r=\{\pm\one_J:~\varnothing\ne J\subseteq[r]\}~{\rm and}~ \one_J=\sum_{j\in J}\mathbf{e}_j
\end{align}
is bad. 
Moreover, we have
\begin{equation}\label{eq:uniform-lower-bound}
 s_r(n)\ge(2^{r+1}-2)(n-1)+1.
\end{equation}
\end{theorem}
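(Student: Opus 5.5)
The plan is to show directly that every $n$-term subsequence of $A_{r,n}$ whose sum vanishes modulo $n$ actually sums to the zero vector in $\Z^r$. Then \eqref{eq:required-sum} can never hold with $|I|=n$, so $A_{r,n}$ is bad. The lower bound follows by counting terms.

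First I check admissibility. Each $\mathbf{v}=\pm\one_J$ with $J\ne\varnothing$ has some coordinate equal to $\pm1$, and $\pm1\not\equiv0\pmod n$ because $n>1$. Next, fix $I$ with $|I|=n$ and $S=\sum_{i\in I}\mathbf{a}_i\equiv\mathbf{0}\pmod n$, and write $S=(S_1,\ldots,S_r)$. Every entry of every term lies in $\{-1,0,1\}$, so $|S_k|\le n$ for each $k$. Combined with $n\mid S_k$, this gives $S_k\in\{-n,0,n\}$.

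The key step is to rule out $S_k=\pm n$; this is where the multiplicity $n-1$ matters. Suppose $S_k=n$ for some $k$; the case $-n$ is symmetric. Then all $n$ chosen terms have $k$-th coordinate $+1$. Since each term has the form $+\one_J$ or $-\one_J$, with all nonzero entries of one sign, every chosen term is $\one_{J_i}$ with $k\in J_i$. Every coordinate of the chosen terms is then in $\{0,1\}$, so for each $k'$ we have $S_{k'}\in\{0,n\}$. Hence each index $k'$ lies either in all of the $J_i$ or in none of them. Therefore all $J_i$ coincide, and the $n$ chosen terms are copies of a single vector. This is impossible, since each vector appears only $n-1$ times in $A_{r,n}$.

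Consequently $S=\mathbf{0}$ in $\Z^r$, so $S\equiv\mathbf{0}\pmod{n^2}$ and $A_{r,n}$ is bad. Finally, $|V_r|=2(2^r-1)$, because the nonempty sets $J$ give $2^r-1$ choices and the sign gives a factor $2$, with $\one_J\ne-\one_{J'}$ always. Hence $|A_{r,n}|=(2^{r+1}-2)(n-1)$. A bad sequence of this length shows that length $(2^{r+1}-2)(n-1)$ does not suffice, which gives \eqref{eq:uniform-lower-bound}.
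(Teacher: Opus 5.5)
Your proof is correct. It reaches the conclusion by a more direct route than the paper.

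The paper first proves a general lemma (Lemma~\ref{prop:lattice-bounds}). If $P$ is a lattice polytope with $\mathbf{0}$ interior and $P\cap L=\{\mathbf{0}\}\cup\operatorname{Vert}(P)$, then taking $n-1$ copies of each vertex gives a bad sequence. The reason is that the average $\mathbf{x}/n$ of $n$ chosen terms with $\mathbf{x}\in nL$ is a lattice point of $P$. If that point is a vertex, all chosen terms must equal it, which the multiplicity $n-1$ forbids. The paper then specializes to $P_r=\operatorname{conv}([0,1]^r\cup[-1,0]^r)$. This requires two computations: the lattice points $P_r\cap\Z^r$, via the inequalities $|x_i|\le1$ and $|x_i-x_j|\le1$, and the vertex set $\operatorname{Vert}(P_r)=V_r$, via the linear functionals $\ell_J$.

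Your coordinatewise argument does the same work in a few lines, with no convex geometry:
\begin{itemize}
\item the bound $|S_k|\le n$ plays the role of $|x_i|\le 1$;
\item the fact that each $\pm\one_J$ has all nonzero entries of one sign replaces the constraint $|x_i-x_j|\le1$;
\item your step ``each $k'$ lies in all or none of the $J_i$'' replaces the vertex argument, forcing all $n$ chosen terms to coincide.
\end{itemize}

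What you give up is generality and context. The paper's lemma applies to any polytope with this lattice-point property. It also gives meaning to Remark~\ref{rem:lattice-bound-scope}: by Nill's bound, $2^{r+1}-2$ is the maximum number of vertices such a polytope can have. So $P_r$ is optimal among constructions of this kind, which a direct computation does not show.
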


\begin{remark}\label{rem.main1}
Our lower bound~\eqref{eq:uniform-lower-bound} indeed yields
  the following improvements over Sun's lower bounds~\cite{S26}.
  \begin{itemize}
    \item In \cite[Theorem~1.3(ii)]{S26}, Sun proved $s_r(n)\ge2rn+1$ for $r\ge1$ and
    $n>2$.
    Our bound is strictly stronger for $r\ge2$ and $n>2$, except
    $(r,n)=(2,3)$, where the bounds coincide.
    For $r=2$ and $n\ge4$, the bound~\eqref{eq:uniform-lower-bound}
    gives $$s_2(n)\ge6n-5>4n+1,$$ 
    \emph{disproving} Sun's rank-two conjecture
    \cite[Conjecture~1.2]{S26}.

    \item For $r\ge2$, Sun also proved
    $3\cdot2^r+1\le s_r(3)\le2\cdot3^r-1$ in \cite[Theorem~1.4(i)]{S26}.
    Our bound $s_r(3)\ge4\cdot2^r-3$ improves the lower bound by
    $2^r-4$, with strict improvement for $r\ge3$.
  \end{itemize}
\end{remark}

Our second main result presents some exact values for $w_r(n)$ and solves Sun's rank-two conjecture for certain moduli, 
which derives from a complete classification of all $w$-bad sequences of arbitrary lengths for $n=2$ and $r\geq 1$, 
and a complete classification of all $w$-bad sequences of length $4p-4$ modulo $p^2$ for $n=p$ and $r=2$. 
During the process, we also obtain doubling and tripling inequalities to propagate the result $w_2(t)$ to $w_2(dt)$ for $d\in\{2,3\}$.

\begin{theorem}\label{thm:exact}
The following statements hold.
\begin{enumerate}
\item[(i)] For every integer $r\ge1$, we have $w_r(2)=2r+1$.
\item[(ii)] For $d\in \{2,3\}$ and every integer $t>1$, we have $w_2(dt)\leq dw_2(t)+3(d-1)$.
\item[(iii)] For every prime $p$ and integers $k,\ell\ge0$, we have 
$w_2(2^k3^\ell p)=4\cdot2^k3^\ell p-3$. 
\end{enumerate}
\end{theorem}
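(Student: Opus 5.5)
\textbf{Part (i).} For the lower bound I would use the sequence $\prod_{j=1}^r \mathbf{e}_j(-\mathbf{e}_j)$ of length $2r$. Suppose a subsequence has sum $\equiv\mathbf{0}\pmod 2$. In each coordinate it must then contain either both $\pm\mathbf{e}_j$ or neither, so its sum is exactly $\mathbf{0}$. Hence this sequence is $w$-bad, and $w_r(2)\ge 2r+1$.

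For the upper bound, let $A$ be $w$-bad of length $N$. Consider the linear map $\phi:\F_2^N\to\F_2^r$, $I\mapsto\sigma(I)\bmod 2$, and put $K=\ker\phi$. Then $\dim K\ge N-r$, and every $I\in K$ has $\sigma(I)\equiv\mathbf{0}\pmod 4$.

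For $I,J\in K$ we have $\sigma(I\triangle J)=\sigma(I)+\sigma(J)-2\sigma(I\cap J)$. This forces $2\sigma(I\cap J)\equiv\mathbf{0}\pmod 4$, so $I\cap J\in K$. Therefore $K$ is a subspace of $\F_2^N$ closed under intersections. Such a subspace is spanned by its atoms $B_1,\dots,B_m$, which are pairwise disjoint, with $m=\dim K$.

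Each atom is a minimal zero-sum subsequence mod $2$. Its size is at least $2$ by admissibility, and any $|B_i|-1$ of its terms are independent mod $2$. Choose those terms from every atom and add all terms outside the atoms. I would check that this set is independent mod $2$, using minimality and the fact that every zero-sum-mod-$2$ set is a union of atoms. This gives
\[
r\ge N-m=\textstyle\sum_i(|B_i|-1)+(N-\sum_i|B_i|)\ge m .
\]
Hence $N\le 2(N-m)\le 2r$, so $w_r(2)\le 2r+1$. The same analysis yields the classification of $w$-bad sequences mod $4$.

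\textbf{Part (ii).} Let $n=dt$ with $d\in\{2,3\}$, and let $A$ be admissible mod $dt$ of length $dw_2(t)+3(d-1)$. I would first separate the terms that are $\equiv\mathbf{0}\pmod t$ from the others. From the latter I would repeatedly extract disjoint nonempty blocks $S_i$ with $\sigma(S_i)\equiv\mathbf{0}\pmod t$. Either some block already has $\sigma(S_i)\not\equiv\mathbf{0}\pmod{t^2}$, or all block sums are divisible by $t^2$. The length budget is meant to supply at least $2d-1=D(\Z_d^2)$ blocks, together with the terms divisible by $t$, which are nonzero mod $dt$.

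I would then pass to the quotient vectors $\sigma(S_i)/t$ and work in $\Z_d^2$. There I would find a sub-collection whose combined sum is $\equiv\mathbf{0}\pmod{dt}$ but not mod $(dt)^2$, using the small-group structure of $\Z_2^2$ and $\Z_3^2$ and the exact classification from part (i) for $d=2$.

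I expect this bookkeeping to be the main obstacle. The difficulty is controlling divisibility by $d^2$ of the combined sums when $\gcd(d,t)>1$. The slack $3(d-1)$ is exactly what a case analysis on the lifts mod $d^2$ should consume.

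\textbf{Part (iii).} The lower bound comes from the sequence $\mathbf{e}_1^{[n-1]}(-\mathbf{e}_1)^{[n-1]}\mathbf{e}_2^{[n-1]}(-\mathbf{e}_2)^{[n-1]}$, which is $w$-bad. Indeed, $a\mathbf{e}_1-b\mathbf{e}_1\equiv\mathbf{0}\pmod n$ with $0\le a,b\le n-1$ forces $a=b$, and similarly in the second coordinate, so every such sum is exactly $\mathbf{0}$. This gives $w_2(n)\ge 4n-3$.

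For the upper bound, part (ii) gives $w_2(dt)\le d(4t-3)+3(d-1)=4dt-3$, so it suffices to prove $w_2(p)=4p-3$ and induct on $k$ and $\ell$. For the prime case, suppose a $w$-bad sequence of length $4p-3$ existed. Deleting any one term leaves a $w$-bad sequence of length $4p-4$.

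I would classify these: the $w$-bad sequences of length $4p-4$ should be, up to $GL_2$ action mod $p^2$, of the form $\mathbf{u}^{[p-1]}(-\mathbf{u})^{[p-1]}\mathbf{v}^{[p-1]}(-\mathbf{v})^{[p-1]}$. The route would use Reiher's theorem $s(\Z_p^2)=4p-3$, the known structure of long zero-sum-free sequences in $\Z_p^2$ (Davenport constant $2p-1$, property B), and a lifting argument mod $p^2$. With this shape fixed, re-inserting the deleted term and checking by cases produces a subsequence with sum $\equiv\mathbf{0}\pmod p$ but $\not\equiv\mathbf{0}\pmod{p^2}$, a contradiction. This classification is the hardest step of the whole theorem.
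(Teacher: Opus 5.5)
Your part (i) is correct. Showing that $K=\ker\phi$ is closed under intersection is a clean, basis-free version of the paper's proof that the supports $S_j$ are disjoint. You do not even need the independence check, since $2m\le N$ and $m=\dim K\ge N-r$ already give $N\le 2r$. Your lower bounds and the reduction of (iii) to primes via (ii) are also fine. But (ii) is not proved, and the decomposition you sketch (blocks modulo $t$ first, quotient in $\Z_d^2$) runs the wrong way for the available length:
\begin{itemize}
\item Blocks with sum $\equiv\mathbf 0\pmod t$ have uncontrolled length. Minimal ones can have up to $D(\Z_t^2)=2t-1$ terms, and there is no comparable bound once you also require $\sigma(S_i)\not\equiv\mathbf 0\pmod{t^2}$. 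You need that requirement so that a block whose quotient vanishes mod $d$ is itself a witness.
\item $2d-1$ quotient terms are not enough. Even $4d-4$ of them may form $E_{2,d}$, whose zero-sum-mod-$d$ subsums are exactly $\mathbf 0$, so you need $w_2(d)=4d-3$ blocks. Greedy extraction cannot afford this; for $d=2$ it costs up to $5(2t-1)>8t-3$ terms.
\end{itemize}
The shape $d\,w_2(t)+3(d-1)$ signals $w_2(t)$ blocks of about $d$ terms each. That means blocks taken modulo $d$, with $w_2(t)$ used on the quotient.

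This is the paper's route. Call $B$ Type~I if it is a single term $\equiv\mathbf 0\pmod d$, and Type~II if $\sigma(B)\equiv\mathbf 0\pmod d$ but $\sigma(B)\not\equiv\mathbf 0\pmod{d^2}$. For pairwise disjoint such blocks, $w$-badness of $A$ makes $\sigma(B_i)/d$ an admissible $w$-bad sequence modulo $t$. Hence there are fewer than $w_2(t)$ of them, for every $t$, so your $\gcd(d,t)$ worry never arises. One then builds $w_2(t)$ of them:
\begin{itemize}
\item For $d=2$: among $7$ terms nonzero mod $2$, three share a residue. The identity $2\mathbf x=(\mathbf x+\mathbf y)+(\mathbf x+\mathbf z)-(\mathbf y+\mathbf z)$ then forces a Type~II pair. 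The last block comes from $w_2(2)=5$.
\item For $d=3$ you need an extra lemma. Any $13$ terms nonzero mod $3$ contain a Type~II block of size $2$ or $3$. Any $12$ such terms with none are $\mathbf U^{[2]}(-\mathbf U)^{[2]}\mathbf V^{[2]}(-\mathbf V)^{[2]}(\mathbf U+\mathbf V)^{[2]}(-\mathbf U-\mathbf V)^{[2]}$ mod $9$, which splits into two Type~II blocks. You also need $w_2(3)=9$ from the prime case.
\end{itemize}

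The prime case is the other gap: the classification at length $4p-4$ is only asserted, and the tools you cite do not reach it. Since $4p-4\ge D(\Z_p^2)$, a $w$-bad sequence has many zero sums modulo $p$. The content lies in their lifts modulo $p^2$, which Reiher's theorem and Property~B, being statements modulo $p$, do not see.

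The paper first shows that some line $L\le\F_p^2$ carries exactly $2p-2$ terms. At most $2p-2$ holds because the first coordinates of $A_L$ form a $w$-bad integer sequence and $w_1(p)=2p-1$. For at least $2p-2$, suppose not. Then one can write $A=J_1J_2\mathbf z$ with each line carrying at most $p-1$ terms of $J_1$ and $p-2$ of $J_2$. Then $\Sigma(J_1)$ misses at most one residue (Matom\"aki) and $\Sigma(J_2)$ at most two. So some coset $C$ of $\langle\mathbf b\rangle$, where $\mathbf b=\mathbf z\bmod p$, lies in $\Sigma(J_1)\cap(-\Sigma(J_2))$. On $C$ the lifts of subsums are forced and satisfy $f(\mathbf g+\mathbf b)\equiv f(\mathbf g)+\mathbf z\pmod{p^2}$. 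Going around $C$ gives $p\mathbf z\equiv\mathbf 0\pmod{p^2}$, contradicting admissibility.

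The paper then shows that zero second-coordinate sums in $A\setminus A_L$ force zero first-coordinate sums. It applies Sun's rank-one classification to the first coordinates of $A_L$ and the second coordinates of $A\setminus A_L$. Once that normal form is available, your re-insertion step does work. After normalizing $\mathbf U,\mathbf V$ to $\mathbf e_1,\mathbf e_2$, adding $k$ copies of $\mathbf e_1$ or $p-k$ copies of $-\mathbf e_1$ changes the first coordinate by amounts differing by $p$, so one choice avoids $0\bmod p^2$.
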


\begin{remark}\label{rem.main2} 
Parts (i) and (ii) of Theorem~\ref{thm:exact} play key roles 
in the proof of Theorem \ref{thm:exact}(iii). 
For $n=2^k3^\ell p$ with $p$ being a prime and $k,\ell\ge0$ being any integers, 
Theorem~\ref{thm:exact}(iii) confirms Sun's rank-two
conjecture~\cite[Conjecture~1.1(ii)]{S26}. 
The equality $w_r(n)=s(\Z_n^r)$ asked about in
\cite[Remark~1.2]{S26} fails in general, but holds for certain parameters.  
    \begin{itemize}
    \item Since $s(\Z_2^r)=2^r+1$~\cite{H73},
Theorem~\ref{thm:exact}(i) answers this question
\emph{negatively} for every $r\ge3$. 

    \item Since $s(\Z_n^2)=4n-3$ for every $n\ge2$
by~\cite{K83,R07},
Theorem~\ref{thm:exact}(iii) answers this question
\emph{affirmatively} in rank two for these moduli.
\end{itemize}
\end{remark}

Our third main result shows that this failure of the equality in Sun's question, 
as stated in Remark \ref{rem.main2}, 
is not confined to the binary case.

\begin{theorem}\label{thm:all-moduli}
For integers $n\ge2$ and $r\ge1$, we have
$$
 w_r(n)\le n\lceil r\log_2 n\rceil
 \bigl(\lceil r\log_2 n\rceil+1\bigr)+1, 
$$ 
and 
$$
  \frac{w_r(n)}{s(\Z_n^r)}
  =O_n\!\left(\frac{r^2}{2^r}\right)\longrightarrow0,~ r\to\infty, 
$$
where $O_n$ denotes big-$O$ notation with an implied constant
that may depend on $n$ but is independent of $r$. 
Moreover, $w_r(n)<s(\Z_n^r)$ 
whenever $r\geq 3$ for $n=2$ and 
$r\geq \left\lceil4\log_2(1+\log_2 n)+4\right\rceil$ for $n\geq 3$.  
\end{theorem}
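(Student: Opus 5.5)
The plan is to combine a swapping observation with a subset-sum pigeonhole in $\Z_n^r$, and then to compare $s(\Z_n^r)$ with the resulting bound. Throughout, put $m=\lceil r\log_2 n\rceil$, so $2^m\ge n^r$, and let $A$ be a $w$-bad sequence. The goal is to show $|A|\le nm(m+1)$.

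\emph{Swapping observation.} Suppose $B$ is a subsequence with $\sigma(B)\equiv\mathbf 0\pmod n$, $\mathbf a\in B$, and $\mathbf b\notin B$ with $\mathbf a\equiv\mathbf b\pmod n$. Then $B\mathbf a^{-1}\mathbf b$ is again zero-sum modulo $n$. Both $\sigma(B)$ and $\sigma(B)-\mathbf a+\mathbf b$ must vanish modulo $n^2$, because $A$ is $w$-bad, and therefore $\mathbf a\equiv\mathbf b\pmod{n^2}$.

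\emph{Extracting blocks.} I would first extract from $A$ disjoint blocks $B_1,B_2,\dots$, each zero-sum modulo $n$ and of length at most $D(\Z_n^r)$. For the Davenport constant I would use the bound $D(\Z_n^r)\le n\bigl(1+\ln n^{r-1}\bigr)\le nm$. The extraction continues until fewer than $nm$ terms remain. By the swapping observation, two terms lying in different blocks that agree modulo $n$ also agree modulo $n^2$. So on the union of the blocks, each residue class modulo $n$ that occurs in two different blocks has a single lift modulo $n^2$.

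\emph{Pigeonhole on subset sums.} Assume $|A|\ge nm(m+1)+1$. I would aim to find $m+1$ terms $\mathbf x_1,\dots,\mathbf x_{m+1}$ with pairwise distinct indices such that each $\mathbf x_j$ has $n-1$ further copies, agreeing with it modulo $n$ and hence modulo $n^2$, located in other blocks. I expect this to be the \emph{main obstacle}. The counting should come from having roughly $m+1$ residue classes, each with about $n$ representatives spread over different blocks, which is where the size $nm(m+1)$ enters. The first thing I would try is a greedy choice: pick a residue class of maximal multiplicity, remove $n$ of its terms, and repeat. The possible failure mode is that many terms lie in classes of multiplicity less than $n$. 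In that case I would instead bound the number of distinct residue classes, and use the fact that $m+1$ pairwise distinct nonzero classes already suffice for the pigeonhole step.

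Given such terms, the $2^{m+1}>n^r$ subset sums of $\mathbf x_1,\dots,\mathbf x_{m+1}$ produce disjoint index sets $I\ne J$ with $\sum_I\mathbf x_i\equiv\sum_J\mathbf x_j\pmod n$. Now form the subsequence consisting of $\mathbf x_i$ for $i\in I$ together with $n-1$ copies of each $\mathbf x_j$ for $j\in J$. Its sum is $\sum_I\mathbf x_i+(n-1)\sum_J\mathbf x_j\equiv\mathbf 0\pmod n$. Since the copies agree modulo $n^2$, this sum is congruent modulo $n^2$ to $\sum_I\mathbf x_i-\sum_J\mathbf x_j+n\sum_J\mathbf x_j$. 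I would then vary the construction, for example by swapping the roles of $I$ and $J$ or using $n$ copies, to force a zero-sum modulo $n$ that is nonzero modulo $n^2$. This contradicts $w$-badness and proves $w_r(n)\le nm(m+1)+1$.

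\emph{Comparison with $s(\Z_n^r)$.} For the asymptotic statement, I would invoke the elementary lower bound $s(\Z_n^r)\ge 2^r(n-1)+1$, which comes from the sequence of all $0/1$ vectors each repeated $n-1$ times. Dividing gives
$$
\frac{w_r(n)}{s(\Z_n^r)}\le\frac{n\,m(m+1)+1}{2^r(n-1)+1}=O_n\!\left(\frac{r^2}{2^r}\right).
$$
For the explicit threshold with $n\ge3$, I would solve $n m(m+1)+1<2^r(n-1)+1$ using $m\le r\log_2 n+1$. Taking logarithms, this reduces to an inequality of the form $2\log_2(r\log_2 n+2)+O(1)<r$, which should hold once $r\ge\lceil4\log_2(1+\log_2n)+4\rceil$. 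For $n=2$, the strict inequality for $r\ge3$ follows directly from Theorem~\ref{thm:exact}(i), since $2r+1<2^r+1$ there.
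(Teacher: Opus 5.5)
\textbf{There is a genuine gap, and it is exactly the step you flag as the main obstacle.}

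The configuration you are trying to produce is already contradictory by itself. Suppose a single term $\mathbf x$ has $n-1$ further copies congruent to it modulo $n^2$. Then these $n$ terms have sum $\equiv n\mathbf x\pmod{n^2}$, which vanishes modulo $n$ but, by admissibility, not modulo $n^2$. So the Davenport blocks, the $2^{m+1}>n^r$ pigeonhole, and the ``vary the construction'' step do no work. The whole theorem has been moved into the unproved claim that length $>nm(m+1)$ forces such a term.

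Length alone cannot force it. In a $w$-bad sequence every residue class modulo $n$ contains at most $n$ terms. Indeed, if $n+1$ terms share a residue, any $n$ of them sum to $\mathbf 0$ modulo $n^2$, so all of them agree modulo $n^2$, and you are back to $n\mathbf x$. Nothing in your counting excludes $nm(m+1)+1$ terms with pairwise distinct residues, which is possible because $nm(m+1)\ll n^r$ for large $r$. In that situation the greedy choice has nothing to pick. The fallback does not help either: with singleton classes there are no copies to realize $-\mathbf x_j$. A collision $\sum_I\mathbf x_i\equiv\sum_J\mathbf x_j\pmod n$ then gives no zero-sum subsequence, let alone one that is nonzero modulo $n^2$. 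Bounding the number of distinct residue classes of a $w$-bad sequence is precisely what must be proved, and the proposal gives no mechanism for it.

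\textbf{The missing idea} is a way to reach the contradiction $n\mathbf z\equiv\mathbf 0\pmod{n^2}$ without repeated residues. The paper does this in Lemma~\ref{lem:two-complete-blocks}(i) by walking around a coset of $\langle\mathbf z\bmod n\rangle$. If a whole coset lies in $\Sigma(J_1)\cap(-\Sigma(J_2))$, the common mod-$n^2$ lift of subset sums of $J_1$ shifts by $\mathbf z$ at each step, and $n$ steps return to the start.

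The paper uses this to prove Lemma~\ref{lem:two-reduced}. Let $B$ be a shortest subsequence with $\Sigma(B)=\Sigma(A)$. Then $C=A\setminus B$ is also reduced. Otherwise $\Sigma(C)=\Sigma(C\setminus\mathbf z)$ is stable under $H=\langle\mathbf z\bmod n\rangle$, hence contains $H$, and so $H\subseteq\Sigma(B)\cap(-\Sigma(C\setminus\mathbf z))$. This gives $w_r(n)\le 2h(\Z_n^r)+1$.

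The paper then bounds reduced sequences by $h(G)\le nd(d+1)/2$ with $d=\lceil\log_2|G|\rceil$ (Corollary~\ref{lem:quadratic-reduced}). The proof is by induction over proper subgroups combined with Arsovski's additive-basis theorem (Lemma~\ref{lem:arsovski-additive-basis}). This, not a pigeonhole on multiplicities, is where the quantity $nm(m+1)$ comes from.

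Your comparison via $s(\Z_n^r)\ge 2^r(n-1)+1$ and your treatment of $n=2$ through Theorem~\ref{thm:exact}(i) match the paper. The explicit threshold for $n\ge3$ is only asserted (``should hold''), though, and still needs the routine verification the paper carries out.
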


\begin{remark}\label{rem.main3}
Theorem~\ref{thm:all-moduli} answers Sun's equality
question~\cite[Remark~1.2]{S26} negatively for every fixed modulus $n$ 
and all sufficiently large ranks $r$. 
In particular, the limit strengthens this conclusion to an asymptotic separation
between $w_r(n)$ and the classical Erd\H{o}s-Ginzburg-Ziv constant $s(\Z_n^r)$.
\end{remark}

The rest of the paper is organized as follows.
Section~\ref{sec:lower} proves Theorem~\ref{thm:geometric}.
Section~\ref{sec:exact} gives the binary and prime-modulus
classifications and proves Theorem~\ref{thm:exact}. 
Section~\ref{sec:all-moduli} proves Theorem~\ref{thm:all-moduli}.

\section{Proof of Theorem~\ref{thm:geometric}}\label{sec:lower}

We begin with a construction of admissible bad sequences from
lattice polytopes.
A \emph{lattice of rank $r$} is the integer span of $r$ linearly
independent real vectors. Let $L$ be such a lattice with basis
$\mathbf{b}_1,\ldots,\mathbf{b}_r$.
Its real linear span, denoted by $\operatorname{span}_{\mathbb R}L$,
consists of all real linear combinations of these basis vectors.
For a set $S$ of real vectors, its \emph{convex hull} is
$
\operatorname{conv}(S)=
\{\sum_{i=1}^{m}\lambda_i\mathbf{x}_i:~
m\ge1,\ \mathbf{x}_i\in S,\ \lambda_i\in\mathbb R,\ \lambda_i\ge0,
\sum_{i=1}^{m}\lambda_i=1\}.
$
A \emph{lattice polytope in $L$}, denoted by $P$, is the convex hull of a finite
nonempty subset of $L$.
Its \emph{dimension} is defined by the dimension of the real linear
span of all differences $\mathbf{x}-\mathbf{y}$ with
$\mathbf{x},\mathbf{y}\in P$.
Therefore, $\dim(P)\le r$, and $\dim(P)=r$ if and only if
the differences span $\operatorname{span}_{\mathbb R}L$.

A \emph{vertex} of $P$ is a point $\mathbf{v}\in P$ that does not lie strictly
between two distinct points of $P$.
We denote the set of all vertices by $\operatorname{Vert}(P)$.
A point $\mathbf{y}\in P$ is an \emph{interior point}
relative to $\operatorname{span}_{\mathbb R}L$ if, for some
$\varepsilon>0$, we have
$
\{\mathbf{y}+\sum_{i=1}^r t_i\mathbf{b}_i:~
t_i\in\mathbb R,\ |t_i|<\varepsilon,~ 1\le i\le r\}
\subseteq P.
$
Note also that an interior point cannot be a vertex in a lattice polytope of positive dimension. 

\begin{lemma}\label{prop:lattice-bounds}
Let $r\ge1$ and $n>1$, and let $P$ be a lattice polytope of dimension $r$
in a lattice $L$ of rank $r$ such that $\mathbf{0}$ is an interior point
of $P$ relative to $\operatorname{span}_{\mathbb R}L$ and
$P\cap L=\{\mathbf{0}\}\cup\operatorname{Vert}(P)$.
The admissible sequence
$$
A(P)=\prod_{\mathbf{v}\in\operatorname{Vert}(P)}
\mathbf{v}^{[n-1]}
$$
is bad. In particular,
$
s_r(n)\ge(n-1)|\operatorname{Vert}(P)|+1.
$
\end{lemma}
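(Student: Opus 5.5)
The plan is to work in coordinates with respect to the basis $\mathbf{b}_1,\ldots,\mathbf{b}_r$ of $L$, so that $L$ is identified with $\Z^r$ and congruences modulo $n$ or $n^2$ in $L$ become the coordinatewise congruences of the paper. The proof then reduces to one convexity observation: if $\mathbf{x}\in nL$ and $\mathbf{x}/n\in P$, then $\mathbf{x}/n$ is a lattice point of $P$. By the hypothesis $P\cap L=\{\mathbf{0}\}\cup\operatorname{Vert}(P)$, such a point is either $\mathbf{0}$ or a vertex. I will use this observation twice, once for admissibility and once for badness.

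\emph{Admissibility.} Each vertex $\mathbf{v}$ is nonzero, since $\mathbf{0}$ is interior and $\dim P=r\ge1$.
\begin{itemize}
\item Suppose $\mathbf{v}\equiv\mathbf{0}\pmod n$, and set $\mathbf{w}=\mathbf{v}/n\in L$.
\item Since $\mathbf{0},\mathbf{v}\in P$, the point $\mathbf{w}=\tfrac1n\mathbf{v}+(1-\tfrac1n)\mathbf{0}$ lies in $P\cap L$, and $\mathbf{w}\ne\mathbf{0}$.
\item Hence $\mathbf{w}$ is a vertex of $P$.
\item But $0<\tfrac1n<1$, so $\mathbf{w}$ lies strictly between the distinct points $\mathbf{0}$ and $\mathbf{v}$ of $P$. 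This contradicts $\mathbf{w}$ being a vertex.
\end{itemize}

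\emph{Badness.} Take any $n$ terms $\mathbf{u}_1,\ldots,\mathbf{u}_n$ of $A(P)$, all vertices, with $S=\sum\mathbf{u}_i\equiv\mathbf{0}\pmod n$.
\begin{itemize}
\item The point $S/n=\sum\tfrac1n\mathbf{u}_i$ is a convex combination of points of $P$, so it lies in $P$. It also lies in $L$, so it is either $\mathbf{0}$ or a vertex $\mathbf{v}$.
\item If $S/n=\mathbf{0}$, then $S=\mathbf{0}$, so $S\equiv\mathbf{0}\pmod{n^2}$. Thus this choice does not satisfy~\eqref{eq:required-sum}.
\item If $S/n=\mathbf{v}$ is a vertex, I will show that every $\mathbf{u}_i$ equals $\mathbf{v}$. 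Then $\mathbf{v}$ would appear $n$ times among the chosen terms, whereas $A(P)$ contains only $n-1$ copies of it. This is a contradiction.
\end{itemize}
Therefore no $n$-term subsequence satisfies~\eqref{eq:required-sum}, and $A(P)$ is bad. Since $|A(P)|=(n-1)|\operatorname{Vert}(P)|$, this gives $s_r(n)\ge(n-1)|\operatorname{Vert}(P)|+1$.

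\emph{The extreme-point fact.} The only step that needs care is the standard statement that a vertex cannot be a convex combination with positive weights of points of $P$ unless all those points equal it. The paper's definition of vertex only speaks of two points, so I will reduce to that case by splitting off one term. Suppose some $\mathbf{u}_1\ne\mathbf{v}$, and write
\[
\mathbf{v}=\tfrac1n\mathbf{u}_1+\tfrac{n-1}{n}\mathbf{y},
\qquad
\mathbf{y}=\tfrac1{n-1}\sum_{i\ge2}\mathbf{u}_i\in P .
\]
\begin{itemize}
\item Here $\mathbf{y}\ne\mathbf{u}_1$, since otherwise $\mathbf{v}=\mathbf{u}_1$.
\item So $\mathbf{v}$ lies strictly between the distinct points $\mathbf{u}_1$ and $\mathbf{y}$ of $P$, which contradicts $\mathbf{v}$ being a vertex.
\end{itemize}
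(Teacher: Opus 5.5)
Your proof is correct and follows the paper's argument essentially step for step: you identify $L$ with $\Z^r$, you observe that the sum $\mathbf{x}$ of the $n$ chosen terms gives $\mathbf{x}/n\in P\cap L$, and you split $\mathbf{x}/n=\frac1n\mathbf{u}_i+\frac{n-1}{n}\mathbf{y}$ to force every chosen term to equal the vertex, which contradicts its multiplicity $n-1$. The only small deviation is in the admissibility step: you note directly that $\mathbf{v}/n$ lies strictly between $\mathbf{0}$ and $\mathbf{v}$, whereas the paper shows $\mathbf{v}/n$ is an interior point and then uses that interior points are not vertices; both arguments work.
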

\begin{proof}
Fix a lattice basis $\mathbf{b}_1,\ldots,\mathbf{b}_r$
and identify $L$ with $\Z^r$ via the bijection
$$
\sum_{i=1}^r m_i\mathbf{b}_i
\longleftrightarrow(m_1,\ldots,m_r),~{\rm with}~ m_i\in\Z.
$$
Therefore, any point $\mathbf{x}\in L$ corresponds to a unique coordinate vector $(m_1,\ldots,m_r)$.
For $k=1,2$, we write $n^kL=\{n^k\mathbf{x}:~\mathbf{x}\in L\}$.
Then a point belongs to $n^kL$ if and only if all its coordinates
are divisible by $n^k$.
Thus, the bijection maps $n^kL$ onto $n^k\Z^r$.
Since it is also additive, it preserves admissibility and
\eqref{eq:required-sum}.

For any vertex $\mathbf{v}$, the conditions $\dim(P)=r\ge1$
and $\mathbf{0}$ interior imply $\mathbf{v}\ne\mathbf{0}$.
Convexity and the interior condition also give $\varepsilon>0$
such that, for real $t_i$ with $|t_i|<\varepsilon$ for all $i$, we have
$
\frac{\mathbf{v}}{n}
+\left(1-\frac{1}{n}\right)\sum_{i=1}^r t_i\mathbf{b}_i\in P.
$
In particular, taking $t_i=0$ for all $i$ gives $\frac{\mathbf{v}}{n}\in P$.
As $(1-\frac{1}{n})\varepsilon>0$, the point $\frac{\mathbf{v}}{n}$ is interior.
If $\mathbf{v}$ belonged to $nL$, then $\frac{\mathbf{v}}{n}\in L$.
In this case, we deduce that $\mathbf{0}\ne \frac{\mathbf{v}}{n}\in P\cap L$ would be an interior point,
contrary to the assumption $P\cap L=\{\mathbf{0}\} \cup \operatorname{Vert}(P)$.
Therefore, $\mathbf{v}\notin nL$.
This shows that $A(P)$ is admissible.

Let $\mathbf{a}_1,\ldots,\mathbf{a}_n$ be any selected terms of $A(P)$
with sum $\mathbf{x}\in nL$.
By convexity, $\frac{\mathbf{x}}{n}\in P$, and the assumption
$\mathbf{x}\in nL$ gives $\frac{\mathbf{x}}{n}\in L$.
Thus, $\frac{\mathbf{x}}{n}\in P\cap L$.
Suppose that $\frac{\mathbf{x}}{n}$ is a vertex.
For each $1\le i\le n$,
we have $$
\frac{\mathbf{x}}{n}
=\frac{1}{n}\mathbf{a}_i
+\frac{n-1}{n}
 \left(\frac{1}{n-1}\sum_{j\ne i}\mathbf{a}_j\right)\in P,
 ~{\rm and}~\frac{1}{n-1}\sum_{j\ne i}\mathbf{a}_j\in P. 
$$
Since $n>1$, both $\frac{1}{n}$ and $\frac{n-1}{n}$ are positive.
By the definition of a vertex, we conclude that
$\mathbf{a}_i=\frac{\mathbf{x}}{n}$ for every $1\le i\le n$.
However, each vertex occurs only $n-1$ times in $A(P)$, a contradiction.
Hence, $\frac{\mathbf{x}}{n}$ is not a vertex.
It then follows from the assumption on $P\cap L$ that $\frac{\mathbf{x}}{n}=\mathbf{0}$,
and hence, $\mathbf{x}=\mathbf{0}\in n^2L$.
As a result, $A(P)$ is bad and
$$s_r(n)\ge |A(P)|+1=(n-1)|\operatorname{Vert}(P)|+1.$$
This completes the proof.\end{proof}

We use $[x,y]=\{t\in\mathbb R:~x\le t\le y\}$ to denote a closed interval of real numbers.
The sets $[0,1]^r$ and $[-1,0]^r$ consist of real vectors with coordinates in $[0,1]$ and $[-1,0]$, respectively.
To prove Theorem~\ref{thm:geometric}, we apply
Lemma~\ref{prop:lattice-bounds} with 
\begin{align}\label{eq:polytope}
  \begin{split}
    L_r=\Z^r\quad{\rm and}\quad
    P_r  = \operatorname{conv}([0,1]^r\cup[-1,0]^r)
  \end{split}
\end{align}

\begin{proof}[\textbf{Proof of Theorem~\ref{thm:geometric}}]
We verify that $P_r$ in~\eqref{eq:polytope} satisfies the
hypotheses of Lemma~\ref{prop:lattice-bounds} with $L_r=\Z^r$.
Note that $[0,1]^r$ and $[-1,0]^r$ are the convex hulls of
$\{0,1\}^r$ and $\{-1,0\}^r$, respectively.
By the definition of $\one_J$ in~\eqref{eq.A_rn}, every nonzero
vector in $\{0,1\}^r$ is $\one_J$ for a unique nonempty
$J\subseteq[r]$.
Therefore,
$$
P_r=\operatorname{conv}(\{0,1\}^r\cup\{-1,0\}^r)
=\operatorname{conv}(V_r\cup\{\mathbf{0}\}).
$$
Since the set $V_r\cup\{\mathbf{0}\}\subseteq L_r$ is finite,
$P_r$ is a lattice polytope in $L_r$.
Since $\mathbf{0}$ is the midpoint of
$\mathbf{e}_1,-\mathbf{e}_1\in V_r$, we also have
$P_r=\operatorname{conv}(V_r)$.
Moreover, the points $\pm\mathbf{e}_i\in V_r$ for $1\le i\le r$
give
$
\left\{
\sum_{i=1}^r t_i\mathbf{e}_i:
t_i\in\mathbb R,\ \sum_{i=1}^r|t_i|\le1
\right\}\subseteq P_r.
$
Hence, $\dim(P_r)=r$ and $\mathbf{0}$ is an interior point of $P_r$ relative to $\operatorname{span}_{\mathbb R}L_r=\mathbb R^r$.

For every $\mathbf{x}=(x_1,\ldots,x_r)$ in $[0,1]^r$ or $[-1,0]^r$,
we have $|x_i|\le1$ and $|x_i-x_j|\le1$ for all $1\le i,j\le r$.
Since convex combinations preserve these inequalities,
they hold throughout $P_r$.
It implies that each integer point of $P_r$ has coordinates in $\{-1,0,1\}$,
and the difference bound $|x_i-x_j|\le1$ excludes the simultaneous occurrence
of coordinates equal to $1$ and $-1$.
Hence, such an integer point $\mathbf{x}=(x_1,\ldots,x_r)\in P_r\cap L_r$ is either $\mathbf{0}$ or belongs to $V_r$,
$i.e.$, $P_r\cap L_r\subseteq\{\mathbf{0}\}\cup V_r$.
Combining with the fact $\{\mathbf{0}\}\cup V_r\subseteq P_r\cap L_r$, we arrive at the conclusion
$$
P_r\cap L_r=\{\mathbf{0}\}\cup V_r.
$$

We then determine the vertices of $P_r$.
For a nonempty $J\subseteq[r]$ and a real vector $\mathbf{x}=(x_1,\ldots,x_r)\in\mathbb R^r$,
we define $k=|J|$ and
$$
\ell_J(\mathbf{x})
=(r-k+1)\sum_{i\in J}x_i-k\sum_{i\notin J}x_i.
$$
For every nonempty $K\subseteq[r]$, we then have
$$
\ell_J(\one_K)
=(r-k+1)|K\cap J|-k|K\setminus J|
\le k(r-k+1),
$$
with equality if and only if $K=J$.
For the negative vectors, we have
$$
\ell_J(-\one_K)
=-(r-k+1)|K\cap J|+k|K\setminus J|
\le k(r-k)<k(r-k+1).
$$
Since $P_r=\operatorname{conv}(V_r)$ and $\ell_J$ is linear,
these inequalities show that $\ell_J$ attains its unique maximum
on $P_r$ at $\one_J$.
If $\one_J$ lay strictly between two distinct points of $P_r$,
linearity would force both points to attain the same maximum.
Therefore, $\one_J$ is a vertex of $P_r$. 
The symmetry $P_r=-P_r$ gives the same conclusion for $-\one_J$.
Since every vertex of the convex hull of a finite set belongs to that set,
we deduce that
$$
\operatorname{Vert}(P_r)=V_r.
$$

Thus, $P_r$ satisfies the hypotheses of
Lemma~\ref{prop:lattice-bounds}, and $A(P_r)=A_{r,n}$.
Since $|V_r|=2(2^r-1)$, the lemma yields the desired badness and
the lower bound~\eqref{eq:uniform-lower-bound}.
\end{proof}

\begin{remark}\label{rem:lattice-bound-scope}
For a polytope $P$ as in
  Lemma~\ref{prop:lattice-bounds}, Nill's bound says that 
  $|\operatorname{Vert}(P)|\le2^{r+1}-2$ \cite[Corollary~6.3]{N05}. 
  Therefore, the polytope $P_r$ used in Theorem~\ref{thm:geometric} reaches this bound 
  and gives the largest lower bound on $s_r(n)$ obtainable from this lemma for fixed $r$ and $n$.
\end{remark}

\section{Proof of Theorem~\ref{thm:exact}}\label{sec:exact}
This section proves Theorem~\ref{thm:exact} in three parts.  
For some exact values of $w_r(n)$ determined in this section, 
we need to classify all $w$-bad sequences of the corresponding lengths.
Note also that analogous classification questions for classical 
zero-sum constants were studied as inverse zero-sum problems
in~\cite{GGS07,S12}.

\subsection{A lower bound on $w_r(n)$ and the proof of Theorem~\ref{thm:exact}(i)}

We first provide a unified lower bound on $w_r(n)$ by constructing a $w$-bad sequence 
in $\Z^r$ of length $2r(n-1)$ as follows.

\begin{lemma}\label{lem:axis}
Let $n>1$ and $r\ge1$. Then the admissible sequence 
$$E_{r,n}=\prod_{j=1}^r\mathbf{e}_j^{[n-1]}(-\mathbf{e}_j)^{[n-1]}$$  
is $w$-bad. Moreover, $w_r(n)\ge2r(n-1)+1$.
\end{lemma}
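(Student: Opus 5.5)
The plan is to work coordinatewise, since the terms of $E_{r,n}$ are supported on single coordinates. Admissibility is immediate: each term is $\pm\mathbf{e}_j$, and this is nonzero modulo $n$ because $n>1$. For $w$-badness, fix a nonempty index set $I$ and suppose the selected sum $\mathbf{x}=\sum_{i\in I}\mathbf{a}_i$ satisfies $\mathbf{x}\equiv\mathbf{0}\pmod n$. The goal is to show that $\mathbf{x}=\mathbf{0}$, which lies in $n^2\Z^r$, so that \eqref{eq:required-sum} can never hold.

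To do this, look at the $j$-th coordinate of $\mathbf{x}$. Let $\alpha_j$ be the number of selected copies of $\mathbf{e}_j$ and $\beta_j$ the number of selected copies of $-\mathbf{e}_j$, so $0\le\alpha_j,\beta_j\le n-1$. Only these terms contribute to coordinate $j$, so $x_j=\alpha_j-\beta_j$. The two bounds give $|x_j|\le n-1<n$. Combined with $n\mid x_j$, this forces $x_j=0$ for every $j$. Hence $\mathbf{x}=\mathbf{0}$, and no nonempty subsequence satisfies \eqref{eq:required-sum}.

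Since $|E_{r,n}|=2r(n-1)$ and $E_{r,n}$ is an admissible $w$-bad sequence, the definition of $w_r(n)$ gives $w_r(n)\ge 2r(n-1)+1$.

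Nothing here is a real obstacle. The only point needing care is that each vector appears at most $n-1$ times, which is exactly what makes the bound $|x_j|\le n-1$ strict enough. This is the same mechanism as in Lemma~\ref{prop:lattice-bounds}, where $P$ is the cross-polytope with vertices $\pm\mathbf{e}_j$. One could cite that lemma, but it only handles $n$-term sums, so the direct coordinate argument above is what covers arbitrary nonempty subsequences.
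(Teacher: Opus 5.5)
Your proof is correct and follows essentially the paper's argument: both count the selected copies of $\mathbf{e}_j$ and $-\mathbf{e}_j$, note that their difference has absolute value at most $n-1$, so divisibility by $n$ forces it to vanish. Hence every subsequence with sum zero modulo $n$ sums to $\mathbf{0}$ in $\Z^r$, and the bound follows from $|E_{r,n}|=2r(n-1)$.
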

\begin{proof}
By construction, $E_{r,n}$ is admissible.
For a subsequence, let $u_j$ and $v_j$ count the selected copies of 
$\mathbf{e}_j$ and $-\mathbf{e}_j$, respectively. 
Since $0\le u_j,v_j\le n-1$, $u_j-v_j$ is divisible by $n$ only when $u_j=v_j$.
Therefore, every subsequence of $E_{r,n}$ with sum zero modulo $n$
has sum zero in $\Z^r$, and $E_{r,n}$ is $w$-bad.
Since $|E_{r,n}|=2r(n-1)$, we obtain $w_r(n)\ge 2r(n-1)+1$.
\end{proof}

We now prove Theorem~\ref{thm:exact}(i) by showing that
$2r+1$ is also an upper bound for $w_r(2)$.
For this purpose, the following theorem classifies all nonempty
$w$-bad sequences in $\Z^r$ for $n=2$, with the normal form
specified modulo $4$.

\begin{theorem}\label{thm:binary}
Let $A=\prod_{i=1}^N \mathbf{a}_i$ be a nonempty
sequence in $\Z^r$ with $\mathbf{a}_i\not\equiv\mathbf0\pmod2$
for every $i\in[N]$, and let
$
 d=\dim_{\F_2}\langle
 \overline{\mathbf{a}}_1,\ldots,\overline{\mathbf{a}}_N\rangle,
$
where a bar denotes reduction modulo $2$.
The sequence $A$ is $w$-bad if and only if, after reordering,
\begin{equation}\label{eq:binary-normal-form}
\begin{aligned}
 A=\mathbf{x}_1\cdots \mathbf{x}_d\, \mathbf{y}_1\cdots \mathbf{y}_k,~{\rm with}~
 \mathbf{y}_j\equiv-\sum_{i\in S_j}\mathbf{x}_i
     \pmod4 ~{\rm for}~1\le j\le k,
\end{aligned}
\end{equation}
where $k\ge0$, the $\overline{\mathbf{x}}_i$ are linearly independent,
and the nonempty subsets $S_j\subseteq[d]$ are pairwise disjoint.
\end{theorem}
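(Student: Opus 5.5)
The plan is to prove the two directions separately. In both, the only facts used are that reduction modulo $2$ detects the first condition in \eqref{eq:required-sum} and that $2\mathbf{z}\equiv\mathbf0\pmod4$ exactly when $\mathbf{z}\equiv\mathbf0\pmod2$. Throughout, write $\mathbf{x}_S=\sum_{i\in S}\mathbf{x}_i$ for $S\subseteq[d]$.

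\emph{Sufficiency.} Assume $A$ has the form \eqref{eq:binary-normal-form}. Take a nonempty subsequence consisting of $\mathbf{x}_i$ for $i\in T\subseteq[d]$ and $\mathbf{y}_j$ for $j\in U\subseteq[k]$. Modulo $2$ its sum is
$$\overline{\mathbf{x}_T}+\sum_{j\in U}\overline{\mathbf{x}_{S_j}}.$$
The $S_j$ are pairwise disjoint, so each index $i$ occurs at most once in $T$ and at most once in $\bigcup_{j\in U}S_j$. Since the $\overline{\mathbf{x}}_i$ are linearly independent, the sum vanishes modulo $2$ if and only if $T=\bigcup_{j\in U}S_j$, the union being disjoint. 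In that case the sum modulo $4$ is
$$\mathbf{x}_T-\sum_{j\in U}\mathbf{x}_{S_j}\equiv\mathbf0\pmod4.$$
Hence $A$ is $w$-bad. Admissibility of the $\mathbf{y}_j$ modulo $2$ holds because each $S_j$ is nonempty and the $\overline{\mathbf{x}}_i$ are independent.

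\emph{Necessity.} Assume $A$ is $w$-bad.
\begin{enumerate}
\item Choose terms $\mathbf{x}_1,\ldots,\mathbf{x}_d$ of $A$ whose reductions form an $\F_2$-basis of $\langle\overline{\mathbf{a}}_1,\ldots,\overline{\mathbf{a}}_N\rangle$, and call the remaining terms $\mathbf{y}_1,\ldots,\mathbf{y}_k$.
\item Each $\overline{\mathbf{y}}_j$ can be written uniquely as $\overline{\mathbf{x}_{S_j}}$, and $S_j\neq\varnothing$ because $\mathbf{y}_j\not\equiv\mathbf0\pmod2$.
\item The subsequence $\mathbf{y}_j\prod_{i\in S_j}\mathbf{x}_i$ has sum $\equiv\mathbf0\pmod2$. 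By $w$-badness its sum is $\equiv\mathbf0\pmod4$, which gives $\mathbf{y}_j\equiv-\mathbf{x}_{S_j}\pmod4$.
\end{enumerate}

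It remains to prove that the $S_j$ are pairwise disjoint, and this is the only step with real content. Suppose $j\neq l$ and $S_j\cap S_l\neq\varnothing$. Consider the subsequence $\mathbf{y}_j\mathbf{y}_l\prod_{i\in S_j\triangle S_l}\mathbf{x}_i$. It is nonempty even if $S_j=S_l$, and its sum vanishes modulo $2$. Using step~3, its sum modulo $4$ is
$$-\mathbf{x}_{S_j}-\mathbf{x}_{S_l}+\mathbf{x}_{S_j\triangle S_l}=-2\,\mathbf{x}_{S_j\cap S_l}.$$
This is $\not\equiv\mathbf0\pmod4$, since $\overline{\mathbf{x}_{S_j\cap S_l}}\neq\mathbf0$ by linear independence. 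That contradicts $w$-badness, so the $S_j$ are disjoint and $A$ has the form \eqref{eq:binary-normal-form}.
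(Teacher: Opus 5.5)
Your proposal is correct and follows essentially the same route as the paper. For sufficiency, linear independence and disjointness force the selected $\mathbf{x}_i$ to be exactly those indexed by $\bigcup_{j\in U}S_j$. For necessity, you use the same two test subsequences, $\mathbf{y}_j\prod_{i\in S_j}\mathbf{x}_i$ and $\mathbf{y}_j\mathbf{y}_l\prod_{i\in S_j\triangle S_l}\mathbf{x}_i$, and the second has sum $-2\,\mathbf{x}_{S_j\cap S_l}$ modulo $4$, which gives disjointness. The differences are only presentational: you argue disjointness by contradiction rather than directly, and your remark on admissibility of the $\mathbf{y}_j$ is redundant since it is a standing hypothesis.
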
 
\begin{proof}
To prove \emph{necessity}, suppose that $A$ is $w$-bad.
Choose terms $\mathbf{x}_1,\ldots,\mathbf{x}_d$ whose residues
form a basis of the residue span of $A$.
After reordering, write
$A=\mathbf{x}_1\cdots\mathbf{x}_d\mathbf{y}_1\cdots\mathbf{y}_k$,
where $k=|A|-d$.
Since $\overline{\mathbf{y}}_j\ne\mathbf{0}$ for every $j\in[k]$,
there is a unique nonempty set $S_j\subseteq[d]$ such that
$\overline{\mathbf{y}}_j=\sum_{i\in S_j}\overline{\mathbf{x}}_i$.
By these residue expressions, both the subsequences
$\mathbf{y}_j\prod_{i\in S_j}\mathbf{x}_i$ for $j\in[k]$ and
$\mathbf{y}_j\mathbf{y}_\ell
\prod_{i\in S_j\mathbin\triangle S_\ell}\mathbf{x}_i$
for distinct $j,\ell\in[k]$
have sum zero modulo $2$, 
where $S_j\mathbin\triangle S_{\ell}= (S_j \setminus S_{\ell}) \cup (S_{\ell} \setminus S_j)$ 
denotes the \emph{symmetric difference}. 
Since $A$ is $w$-bad, their sums are also zero modulo $4$.
In particular, we have 
$
\mathbf{y}_j+\sum_{i\in S_j}\mathbf{x}_i
\equiv\mathbf{0}\pmod4
$
and
\begin{equation*}
\mathbf0
\equiv \mathbf y_j+\mathbf y_\ell
+\sum_{i\in S_j\triangle S_\ell}\mathbf x_i
\equiv-\sum_{i\in S_j}\mathbf x_i
-\sum_{i\in S_\ell}\mathbf x_i
+\sum_{i\in S_j\triangle S_\ell}\mathbf x_i
=-2\sum_{i\in S_j\cap S_\ell}\mathbf x_i
\pmod4,
\end{equation*}
which further implies   
$\sum_{i\in S_j\cap S_\ell}\overline{\mathbf{x}}_i=\mathbf{0}$ in $\F_2^r$.  
By the linear independence of $\overline{\mathbf{x}}_1,\ldots,\overline{\mathbf{x}}_d$ over $\F_2$, 
we deduce that $S_j\cap S_\ell=\varnothing$. 
As a result, $A$ has the form~\eqref{eq:binary-normal-form}
with all the conditions described in the statement.

\emph{Conversely}, suppose that $A$ has the form described in the statement.
Let $B$ be any nonempty subsequence of $A$ whose sum vanishes modulo $2$,
and let $J\subseteq[k]$ index the terms $\mathbf{y}_j$ selected in $B$. 
Linear independence and the pairwise disjointness of the $S_j$
imply that $J\ne\varnothing$ and that the selected $\mathbf{x}_i$
are exactly those with $i\in\bigcup_{j\in J}S_j$. 
It then follows from \eqref{eq:binary-normal-form} that 
$$
\sigma(B)
=\sum_{j\in J}
(\mathbf{y}_j+\sum_{i\in S_j}\mathbf{x}_i)
\equiv\mathbf{0}\pmod4.
$$
Thus, $A$ is $w$-bad. This completes the proof. 
\end{proof}

\begin{proof}[\textbf{Proof of Theorem~\ref{thm:exact}(i)}]
Using the disjointness and nonemptiness of the sets
$S_j$ in Theorem~\ref{thm:binary}, we obtain
$k\le d\leq r$ and $|A|=d+k\le2d\le2r$ for every $w$-bad sequence $A$.
It follows that $w_r(2)\le2r+1$. 
On the other hand, taking $n=2$ in Lemma~\ref{lem:axis} yields $w_r(2)\geq 2r+1$. 
We therefore conclude that $w_r(2)=2r+1$ for every $r\ge1$. 
\end{proof}

\subsection{Classification of some $w$-bad sequences}

We classify, modulo $p^2$, all $w$-bad sequences
in $\Z^2$ of length $4p-4$ when $n=p$ is a prime and deduce
$w_2(p)=4p-3$. 
These results are needed for the proof of Theorem~\ref{thm:exact}(ii)  
in the next subsection.

For an integer vector sequence $J$, let $\Sigma(J)\subseteq\Z_n^r$
be the set of \emph{all} residues modulo $n$ obtained by selecting a
subsequence and summing its terms, including the empty sum $\mathbf{0}$. 
It is also a classical problem and an important task to determine the size of $\Sigma(J)$ 
(see, e.g., \cite{P09,M13,PLLH20}).
Given an integer $t$, we define the dilated set 
$t\Sigma(J)=\{t\mathbf{s}:~\mathbf{s}\in\Sigma(J)\}$.
For a cyclic subgroup $H$ of $\Z_n^r$, 
we denote by $[\Z_n^r:~H]=\frac{n^r}{|H|}$ the number of cosets 
$\mathbf{g}+H=\{\mathbf{g}+\mathbf{h}:~\mathbf{h}\in H\}$,
where $\mathbf{g}\in\Z_n^r$. 
We first provide some sufficient conditions for a sequence to be not $w$-bad.

\begin{lemma}\label{lem:two-complete-blocks}
Let $n>1$ and $r\ge1$, and let
$A=J_1J_2\mathbf{z}$ be admissible in $\Z^r$.
Let $\mathbf{b}=\mathbf{z}\bmod n$, and let
$H=\langle\mathbf{b}\rangle$ be the subgroup of $\Z_n^r$
generated by $\mathbf{b}$. 
Then $A$ is not $w$-bad if either of the following conditions holds: 
\begin{enumerate}
  \item [(i)] There exists a coset $C$ of $H$ such that
  $C\subseteq\Sigma(J_1)\cap(-\Sigma(J_2))$.
  \item [(ii)] $|\Z_n^r\setminus\Sigma(J_1)|+|\Z_n^r\setminus\Sigma(J_2)|<[\Z_n^r:~H]$. 
\end{enumerate}
\end{lemma}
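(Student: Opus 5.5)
The plan for (i) is to argue by contradiction: assume $A$ is $w$-bad, and show that this forces a multiple of $\mathbf{z}$ to vanish modulo $n^2$ strongly enough to contradict admissibility. For (ii), the plan is a pigeonhole argument that produces a coset as in (i).

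\emph{Setting up (i).} Let $m=|H|$ be the order of $\mathbf{b}$ in $\Z_n^r$, so $m\mid n$. Write the coset as $C=\{\mathbf{g}+t\mathbf{b}:~0\le t<m\}$. For each $\mathbf{c}\in C$ we have $\mathbf{c}\in\Sigma(J_1)$ and $-\mathbf{c}\in\Sigma(J_2)$. So I fix subsequences $T_1(\mathbf{c})$ of $J_1$ and $T_2(\mathbf{c})$ of $J_2$, possibly empty, with
\[
\sigma(T_1(\mathbf{c}))\equiv\mathbf{c}\pmod n,\qquad \sigma(T_2(\mathbf{c}))\equiv-\mathbf{c}\pmod n .
\]
Write $\mathbf{x}_{\mathbf{c}}=\sigma(T_1(\mathbf{c}))$ and $\mathbf{y}_{\mathbf{c}}=\sigma(T_2(\mathbf{c}))$ for the corresponding integer sums in $\Z^r$.

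\emph{Two families of zero-sum subsequences.}
\begin{itemize}
\item The subsequence $T_1(\mathbf{c})T_2(\mathbf{c})$ has sum $\equiv\mathbf{0}\pmod n$. If it is nonempty, $w$-badness gives $\mathbf{x}_{\mathbf{c}}+\mathbf{y}_{\mathbf{c}}\equiv\mathbf{0}\pmod{n^2}$. If it is empty, this holds trivially.
\item Put $\mathbf{c}'=\mathbf{c}+\mathbf{b}\in C$. The subsequence $T_1(\mathbf{c})T_2(\mathbf{c}')\mathbf{z}$ has sum $\equiv\mathbf{c}-\mathbf{c}-\mathbf{b}+\mathbf{b}=\mathbf{0}\pmod n$. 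It is nonempty because it contains $\mathbf{z}$, so $w$-badness gives $\mathbf{x}_{\mathbf{c}}+\mathbf{y}_{\mathbf{c}+\mathbf{b}}+\mathbf{z}\equiv\mathbf{0}\pmod{n^2}$.
\end{itemize}
The three sub-blocks lie in disjoint parts $J_1$, $J_2$, $\mathbf{z}$ of $A$, so these really are subsequences of $A$.

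\emph{Telescoping and the contradiction.} Subtracting the first congruence from the second gives
\[
\mathbf{y}_{\mathbf{c}+\mathbf{b}}-\mathbf{y}_{\mathbf{c}}+\mathbf{z}\equiv\mathbf{0}\pmod{n^2}\quad\text{for every }\mathbf{c}\in C .
\]
Summing over $\mathbf{c}=\mathbf{g}+t\mathbf{b}$ for $t=0,\ldots,m-1$, the $\mathbf{y}$-terms telescope around the cycle, which closes since $m\mathbf{b}=\mathbf{0}$. This leaves $m\mathbf{z}\equiv\mathbf{0}\pmod{n^2}$. Since $m\mid n$, every coordinate of $\mathbf{z}$ is divisible by $n^2/m$, which is a multiple of $n$. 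Hence $\mathbf{z}\equiv\mathbf{0}\pmod n$, contradicting admissibility. The main point of care is choosing lifts once per coset element, so that the same $\mathbf{y}_{\mathbf{c}}$ appears in both congruences, and handling the empty-subsequence case.

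\emph{Part (ii).} Let $X=\Z_n^r\setminus(\Sigma(J_1)\cap(-\Sigma(J_2)))$. Then
\[
X=(\Z_n^r\setminus\Sigma(J_1))\cup\bigl(-(\Z_n^r\setminus\Sigma(J_2))\bigr),
\]
so $|X|\le|\Z_n^r\setminus\Sigma(J_1)|+|\Z_n^r\setminus\Sigma(J_2)|<[\Z_n^r:~H]$. The cosets of $H$ are disjoint, so fewer than $[\Z_n^r:~H]$ of them can meet $X$. Hence some coset $C$ is disjoint from $X$, that is, $C\subseteq\Sigma(J_1)\cap(-\Sigma(J_2))$, and (i) applies.
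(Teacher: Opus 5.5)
Your proof is correct and follows the paper's argument. Both fix one lift of a $J_2$-sum per coset element and derive the shift relation $\mathbf{y}_{\mathbf{c}+\mathbf{b}}\equiv\mathbf{y}_{\mathbf{c}}-\mathbf{z}\pmod{n^2}$ from two zero-sum subsequences, then go around the cycle to force $\mathbf{z}\equiv\mathbf{0}\pmod n$; (ii) is reduced to (i) by the same count of cosets meeting the complement. The differences are cosmetic: you close the cycle after $|H|$ steps (using $|H|\mid n$) where the paper uses $n$ steps, and you treat the empty-subsequence case explicitly, which the paper leaves implicit.
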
 
\begin{proof}
  (i) Assume that $A$ is $w$-bad.  
For each $\mathbf{g}\in C$, fix a subsequence $R$ of $J_2$
whose sum has residue $-\mathbf{g}$ modulo $n$ and define $f(\mathbf{g})=-\sigma(R)\bmod n^2$.  
For every subsequence $P$ of $J_1$ with
$\sigma(P)\equiv\mathbf{g}\pmod n$, the sum of $PR$
vanishes modulo $n$.
By $w$-badness of $A$, we arrive at the congruence 
\begin{equation}\label{eq.lemma33-common}
 \sigma(P)\equiv-\sigma(R)\equiv f(\mathbf{g})\pmod{n^2}.
\end{equation}

For any $\mathbf{g}\in C$, the coset property gives
$\mathbf{g}+\mathbf{b}\in C$.
Since $C\subseteq\Sigma(J_1)\cap(-\Sigma(J_2))$,
there are subsequences $P_+$ and $P$ of $J_1$, and a subsequence
$Q$ of $J_2$ such that 
$$
 \sigma(P_+)\equiv\mathbf{g}+\mathbf{b}\pmod n,~
 \sigma(P)\equiv\mathbf{g}\pmod n,~
 \sigma(Q)\equiv-\mathbf{g}-\mathbf{b}\pmod n.
$$
Then both $P_+Q$ and $PQ\mathbf{z}$ have sum zero modulo $n$. 
Combining with the $w$-badness of $A$ and \eqref{eq.lemma33-common}, we derive  
$
 \sigma(Q)+f(\mathbf{g}+\mathbf{b})
 \equiv \sigma(Q)+\mathbf{z}+f(\mathbf{g})
 \equiv\mathbf{0}\pmod{n^2}.
$
Therefore,
\begin{equation}\label{eq.lemma33}
 f(\mathbf{g}+\mathbf{b})
 \equiv f(\mathbf{g})+\mathbf{z}\pmod{n^2}.  
\end{equation}

Since $n\mathbf{b}=\mathbf{0}$ in $\Z_n^r$, 
it follows from applying \eqref{eq.lemma33} $n$ times that 
$$
 f(\mathbf{g})=f(\mathbf{g}+n\mathbf{b})
 \equiv f(\mathbf{g})+n\mathbf{z}\pmod{n^2}.
$$ 
It then turns out that $n\mathbf{z}\equiv\mathbf{0}\pmod{n^2}$, and hence, 
$\mathbf{z}\equiv\mathbf{0}\pmod n$, contrary to the admissibility of $\mathbf{z}$. 
As a result, condition~(i) implies that $A$ is not $w$-bad. 

(ii) It suffices to verify that condition~(i) can be derived from condition~(ii).
Note that every coset of $H$ that is not contained in
$\Sigma(J_1)\cap(-\Sigma(J_2))$ contains a residue outside this
intersection. Since the cosets partition $\Z_n^r$ and
$
 |\Z_n^r\setminus(-\Sigma(J_2))|
 =|\Z_n^r\setminus\Sigma(J_2)|,
$
the number of such cosets is at most
$$
 |\Z_n^r\setminus\Sigma(J_1)|
 +|\Z_n^r\setminus\Sigma(J_2)|< [\Z_n^r:H] 
$$ 
by condition~(ii).
Hence, there exists a coset $C$ of $H$ such that $C\subseteq\Sigma(J_1)\cap(-\Sigma(J_2))$, 
$i.e.$, condition~(i) holds. 
\end{proof}

The following two lemmas give estimates for the number of subsequence sums 
in $\Z_m$ with $m>1$ and $\F_p^2$ with $p\geq 3$, respectively. 
The first one generalizes Matom\"aki's bound \cite[Lemma~1]{M13} for prime moduli
to arbitrary finite cyclic groups. 
It can also be deduced by by taking $B=\{0,u\}$ in Chowla's sumset theorem~\cite{C35} 
and applying the bound successively, 
where $u$ is a generator. 
We include an elementary proof for completeness. 

\begin{lemma}\label{lem:cyclic-growth}
Let $u$ be a generator of the cyclic group $\Z_m$ with $m>1$. 
Then every nonempty proper subset $S$ of $\Z_m$ satisfies 
$$
 |S\cup(S+u)|>|S|, 
$$
where $S+u=\{s+u:~s\in S\}$. 
Moreover, if every term of a sequence $J$ of length $t\ge0$
generates $\Z_m$, then
$$
 |\Sigma(J)|\ge\min(m,t+1).
$$
\end{lemma}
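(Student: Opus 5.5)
The first claim follows from an invariance argument. Suppose, for contradiction, that $S$ is a nonempty proper subset of $\Z_m$ with $|S\cup(S+u)|\le|S|$. Since $|S+u|=|S|$, this forces $S+u\subseteq S$, and then equality holds because both sets have the same finite cardinality. Hence $S+u=S$. Iterating, $S+ju=S$ for every integer $j\ge0$.

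Now fix any $s\in S$. Because $u$ generates $\Z_m$, every element of $\Z_m$ has the form $s+ju$ for some $j\in\{0,\ldots,m-1\}$. All of these lie in $S$, so $S=\Z_m$, contradicting properness. Therefore $|S\cup(S+u)|\ge|S|+1$.

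For the second claim, write $J=\mathbf{u}_1\cdots\mathbf{u}_t$, with residues $u_1,\ldots,u_t\in\Z_m$, each a generator. Let $\Sigma_k=\Sigma(\mathbf{u}_1\cdots\mathbf{u}_k)$, so that $\Sigma_0=\{0\}$. Splitting the subsequences of the first $k$ terms according to whether they omit or include the $k$-th term gives
$$\Sigma_k=\Sigma_{k-1}\cup(\Sigma_{k-1}+u_k).$$
I then prove $|\Sigma_k|\ge\min(m,k+1)$ by induction on $k$.
\begin{itemize}
\item The base case $k=0$ holds since $|\Sigma_0|=1$.
\item If $\Sigma_{k-1}=\Z_m$, then $\Sigma_k=\Z_m$ and there is nothing to prove.
\item Otherwise $\Sigma_{k-1}$ is a nonempty proper subset of $\Z_m$. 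The first part, applied with the generator $u_k$, gives $|\Sigma_k|\ge|\Sigma_{k-1}|+1\ge\min(m,k)+1$. In this case $|\Sigma_{k-1}|<m$, so $\min(m,k)=k$ and the bound is $k+1\ge\min(m,k+1)$.
\end{itemize}
Taking $k=t$ yields $|\Sigma(J)|\ge\min(m,t+1)$.

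The only genuinely substantive step is the invariance argument in the first claim: showing that $S+u=S$ forces $S$ to be a union of full $\langle u\rangle$-orbits, and hence all of $\Z_m$ because $u$ is a generator. The rest is routine bookkeeping, where the main care needed is the case split on whether $\Sigma_{k-1}$ is already all of $\Z_m$.
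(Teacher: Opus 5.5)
Your proof is correct and follows the same route as the paper: the shift-invariance argument ($S+u=S$ with $u$ a generator forces $S=\Z_m$), followed by the recursion $\Sigma_k=\Sigma_{k-1}\cup(\Sigma_{k-1}+u_k)$, which gains at least one element per step until it saturates. Your explicit induction, with the case split on whether $\Sigma_{k-1}=\Z_m$, simply formalizes the paper's ``until all $m$ residues have been obtained'' phrasing.
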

\begin{proof}
Since $S\subseteq \Z_m$ is finite, 
we have $S+u=S$ if $|S\cup(S+u)|=|S|$. 
Since $S$ is nonempty and $u$ generates $\Z_m$, 
we further deduce that $S=\Z_m$, contrary to $S$ being proper.
Hence, $|S\cup(S+u)|>|S|$.

For the further assertion, the case $t=0$ first follows from 
$\Sigma(J)=\{0\}$ and $\min(m,0+1)=1$. 
For $t\ge1$, we write $J=u_1\cdots u_t$, where each $u_i$ generates
$\Z_m$ by the given condition. For every $1\le i\le t$, we have the iteration 
$$
 \Sigma(u_1\cdots u_i)
 =\Sigma(u_1\cdots u_{i-1})
  \cup\bigl(\Sigma(u_1\cdots u_{i-1})+u_i\bigr).
$$
Since each $u_i$ generates $\Z_m$, the inequality
$|S\cup(S+u)|>|S|$ shows that the number of subsequence
sums increases by at least one at each step, until either
all $m$ residues have been obtained or all $t$ terms have been added.
Since $\Sigma(\emptyset)=\{0\}$, this gives
$|\Sigma(J)|\ge\min(m,t+1)$. 
\end{proof}

\begin{lemma}{\rm (\!\! \cite[Lemma~15]{M13})} 
\label{lem:matomaki}
Let $p$ be a prime, and let $S$ be a sequence of $2p-2$
nonzero vectors in $\F_p^2$. If each one-dimensional subspace
contains at most $p-1$ terms of $S$, counted with multiplicity,
then
$
 |\Sigma(S)|\ge p^2-1.
$
\end{lemma}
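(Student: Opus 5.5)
Since the statement is quoted from \cite[Lemma~15]{M13}, we could simply cite it. If a self-contained argument is wanted, the plan is an incremental growth argument for $\Sigma$, in the spirit of the proof of Lemma~\ref{lem:cyclic-growth}, but now in the plane $\F_p^2$.

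\textbf{The growth quantity.} For a set $X\subseteq\F_p^2$ and a nonzero $\mathbf{x}$, the cosets of the line $\langle\mathbf{x}\rangle$ split $X$. Applying the first part of Lemma~\ref{lem:cyclic-growth} inside each coset (each coset is a copy of $\Z_p$, and $\mathbf{x}$ generates it) gives
$$|X\cup(X+\mathbf{x})|-|X|\ \ge\ c_{\mathbf{x}}(X),$$
where $c_{\mathbf{x}}(X)$ is the number of cosets of $\langle\mathbf{x}\rangle$ that meet $X$ without being contained in it. Writing $S=\mathbf{s}_1\cdots\mathbf{s}_{2p-2}$ and $X_i=\Sigma(\mathbf{s}_1\cdots\mathbf{s}_i)$, we have $X_i=X_{i-1}\cup(X_{i-1}+\mathbf{s}_i)$. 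So it suffices to choose an ordering of the terms for which the increments $c_{\mathbf{s}_i}(X_{i-1})$ add up to at least $p^2-2$.

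\textbf{Key steps.}
\begin{itemize}
\item[(1)] Reduce to $S$ having terms in at least two lines; this is forced, since a single line holds at most $p-1$ terms.
\item[(2)] Start with terms from one line $\ell_1$ of maximal multiplicity $a\le p-1$. Each increment is at least $1$, so after these terms the sums fill $a+1$ points of $\ell_1$.
\item[(3)] Add terms from a second direction $\ell_2$. Each such term is transverse to the current set, so $c\ge |X\cap \ell_1|$. The sumset then grows like a product: roughly $(a+1)(b+1)$ points after $b$ terms from $\ell_2$.
\item[(4)] Order the remaining terms greedily, always taking a term whose direction maximises $c_{\mathbf{x}}(X)$. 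The only way the increment can drop to $0$ while $X$ is a proper subset is that $X$ is a union of full cosets of $\langle\mathbf{x}\rangle$. A term in some other direction then yields a large increment, namely a positive multiple of $p$ minus boundary effects.
\end{itemize}
The hypothesis of at most $p-1$ terms per line rules out the obstruction where many terms lie in one direction that $X$ already saturates.

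\textbf{Main obstacle.} The hard part is the bookkeeping in step~(4): showing that the total growth reaches $p^2-1$ and not merely about $p^2-O(p)$. Concretely, one must exclude configurations where $X$ is close to a union of full parallel lines missing a few points. For this I would first try a Kneser-type stabiliser analysis. Split $S=T_1T_2$ with $|T_1|=|T_2|=p-1$, chosen so that neither half is concentrated on a single line. Then write $\Sigma(S)=\Sigma(T_1)+\Sigma(T_2)$ and apply Kneser's theorem in $\F_p^2$. If the stabiliser is trivial, the sizes add, provided each $|\Sigma(T_i)|$ is at least about $p^2/2$, which steps~(2)–(3) give. If the stabiliser is a line $H$, then $\Sigma(S)$ is a union of cosets of $H$. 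In that case, project to $\F_p^2/H\cong\Z_p$ and use Lemma~\ref{lem:cyclic-growth}: at least $p-1$ terms are not in $H$, which forces all $p$ cosets, or at least $p-1$ of them together with the structure of the remaining coset. The extremal case $|\Sigma(S)|=p^2-1$ (e.g.\ $(p-1)$ copies each of $\mathbf{e}_1$ and $\mathbf{e}_2$ omitting $-\mathbf{e}_1-\mathbf{e}_2$) shows there is no slack, so this final case analysis is where the argument must be tight.
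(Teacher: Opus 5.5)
The paper gives no argument for this lemma; it is quoted from \cite[Lemma~15]{M13}. So citing it, as in your first sentence, is exactly the paper's route. The self-contained sketch, however, does not prove the lemma, and it breaks at the decisive case.

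In your Kneser split $S=T_1T_2$ with $|T_1|=|T_2|=p-1$, the line-stabiliser case is fine: at least $p-1$ terms lie off $H$, and they cover $\F_p^2/H$ by Lemma~\ref{lem:cyclic-growth}. In the trivial-stabiliser case, however, Kneser only gives $|\Sigma(S)|\ge|\Sigma(T_1)|+|\Sigma(T_2)|-1$. Your claim that steps (2)--(3) make each $|\Sigma(T_i)|$ about $p^2/2$ is false. A half with $a$ terms on $\ell_1$ and $b$ on $\ell_2$, where $a+b\le p-1$, gives only $(a+1)(b+1)\le(p+1)^2/4$. So you get roughly $p^2/2$, not $p^2-1$. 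This is not a removable inefficiency: a set missing a single point has trivial stabiliser, so the extremal sequences land precisely in this case. Step (4) does not rescue it either. Over $2p-2$ steps the increments must total $p^2-2$, an average of about $p/2$ per term, and your ``multiple of $p$ minus boundary effects'' is never turned into such a count.

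Your extremal example is also wrong, which shows the near-extremal structure has not been identified. The sequence $\mathbf e_1^{[p-1]}\mathbf e_2^{[p-1]}$ has $\Sigma=\F_p^2$; the point you say is omitted, $-\mathbf e_1-\mathbf e_2$, is its total sum. More generally, suppose a line $\ell$ carries $p-1$ terms. These give all of $\ell$, and the other $p-1$ terms map to generators of $\F_p^2/\ell\cong\Z_p$, so Lemma~\ref{lem:cyclic-growth} gives $\Sigma(S)=\F_p^2$. Thus you may assume at most $p-2$ terms per line, and hence at least three directions.

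A genuine extremal sequence for $p\ge3$ is $\mathbf e_1^{[p-2]}\mathbf e_2^{[p-2]}(\mathbf e_1+\mathbf e_2)(\mathbf e_2-\mathbf e_1)$. Its subsequence sums miss only $-\mathbf e_1$; this follows by appending $\mathbf e_2-\mathbf e_1$ to the example in Lemma~\ref{lem:two-missing}. For $p\ge7$, a bounding-box count shows that no split of it into halves of length $p-1$ has $|\Sigma(T_1)|+|\Sigma(T_2)|\ge p^2$. A self-contained proof must control such aperiodic, almost-full configurations, and your sketch supplies no mechanism for that. As it stands, the lemma is established only through the citation of \cite{M13}.
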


Applying Lemma~\ref{lem:matomaki} to a sequence of length $2p-3$ 
gives the following improved result. 

\begin{lemma}\label{lem:two-missing}
Let $p\ge3$ be a prime. If a sequence $J$ in $\F_p^2$ has $2p-3$
nonzero terms and at most $p-2$ terms on each one-dimensional
subspace, then
$$
 |\F_p^2\setminus\Sigma(J)|\le2 
$$ 
and the bound is attained. 
\end{lemma}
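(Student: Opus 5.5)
The plan is to reduce to Lemma~\ref{lem:matomaki} by adding one suitably chosen extra term, and then to exhibit a sequence showing the bound is sharp.

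\emph{Reduction.} Let $J$ be as in the statement. Since $J$ has $2p-3\ge3$ terms and each line carries at most $p-2$ of them, at least two distinct one-dimensional subspaces are represented. Pick any nonzero vector $\mathbf{u}$ on a line $\ell$ containing a term of $J$. The line $\ell$ then carries at most $p-1$ terms of the extended sequence $J\mathbf{u}$, and every other line still carries at most $p-2$. So $J\mathbf{u}$ has $2p-2$ nonzero terms with at most $p-1$ on each line. Lemma~\ref{lem:matomaki} then gives $|\Sigma(J\mathbf{u})|\ge p^2-1$.

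\emph{Counting.} Next I would use $\Sigma(J\mathbf{u})=\Sigma(J)\cup(\Sigma(J)+\mathbf{u})$. Write $M=\F_p^2\setminus\Sigma(J)$ for the missing set. The complement of $\Sigma(J\mathbf{u})$ is $M\cap(M+\mathbf{u})$, so it has at most one element. To conclude $|M|\le2$, I would vary $\mathbf{u}$ over all nonzero multiples on $\ell$, and over a second represented line $\ell'$. For every nonzero $\mathbf{u}\in\ell\cup\ell'$ this gives $|M\cap(M+\mathbf{u})|\le1$.

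Suppose $|M|\ge3$. Then consider the differences of pairs in $M$. For each nonzero $\mathbf{u}$ in $\ell\cup\ell'$, at most one ordered pair $(\mathbf{m},\mathbf{m}')$ of $M$ satisfies $\mathbf{m}-\mathbf{m}'=\mathbf{u}$. This alone is not obviously contradictory, so I expect this to be the main obstacle. A cleaner route avoids it. Lemma~\ref{lem:matomaki} only needs at most $p-1$ terms per line, and $J$ has slack $p-2$, so I can instead add a vector $\mathbf{u}$ on \emph{any} line of $\F_p^2$, including lines containing no term of $J$. Then $|M\cap(M+\mathbf{u})|\le1$ holds for every nonzero $\mathbf{u}\in\F_p^2$.

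Now let $|M|=k$. The $k(k-1)$ ordered differences of distinct elements of $M$ are nonzero, and each nonzero $\mathbf{u}$ arises as a difference at most once. Moreover, $\mathbf{u}$ and $-\mathbf{u}$ are represented symmetrically. This forces $k(k-1)\le p^2-1$, which is not enough by itself. So I would combine it with the structure of $\Sigma(J)$: since $\mathbf{0}\in\Sigma(J)$ and $\sigma(J)\in\Sigma(J)$, there is the complementary symmetry $\Sigma(J)=\sigma(J)-\Sigma(J)$, so $M=\sigma(J)-M$. Taking $\mathbf{m}\in M$ and $\mathbf{u}=\sigma(J)-2\mathbf{m}$, the pair structure gives strong constraints. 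The argument I would try first is a direct one. Take three distinct elements $\mathbf{m}_1,\mathbf{m}_2,\mathbf{m}_3\in M$ and set $\mathbf{u}=\mathbf{m}_2-\mathbf{m}_1$. If additionally $\mathbf{m}_3-\mathbf{u}\in M$, or $\mathbf{m}_3+\mathbf{u}\in M$, we get two elements in $M\cap(M+\mathbf{u})$, which is a contradiction. Otherwise I would use the involution $\mathbf{m}\mapsto\sigma(J)-\mathbf{m}$ to produce such a coincidence. Working out this combinatorial step is where I expect the real difficulty to lie.

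\emph{Sharpness.} For attainment I would take $J=\mathbf{e}_1^{[p-2]}\mathbf{e}_2^{[p-2]}(\mathbf{e}_1+\mathbf{e}_2)$. Here $\Sigma(\mathbf{e}_1^{[p-2]}\mathbf{e}_2^{[p-2]})$ is the box $\{0,\ldots,p-2\}^2$. Adding $\mathbf{e}_1+\mathbf{e}_2$ shifts the box diagonally, and the union misses exactly $(p-1,0)$ and $(0,p-1)$. A short check verifies this, giving exactly two missing residues.
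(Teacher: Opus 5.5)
\textbf{Gap: the final step deducing $|M|\le2$ is left unproved.} Your setup follows the paper's route. Every line carries at most $p-2$ terms of $J$, so appending \emph{any} nonzero $\mathbf{u}$ gives a sequence to which Lemma~\ref{lem:matomaki} applies. Hence $|M\cap(M+\mathbf{u})|\le1$ for every nonzero $\mathbf{u}$, where $M=\F_p^2\setminus\Sigma(J)$. You also correctly note $M=\sigma(J)-M$, and your extremal example and its verification coincide with the paper's. However, you explicitly leave the combinatorial step that yields $|M|\le2$ unresolved. The route you sketch does not close it: with $\mathbf{u}=\mathbf{m}_2-\mathbf{m}_1$ you look for $\mathbf{m}_3\pm\mathbf{u}\in M$, and nothing forces that. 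As written, the upper bound is not established.

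\textbf{The missing idea.} Apply the involution $\mathbf{m}\mapsto\sigma(J)-\mathbf{m}$ to the pair itself, not to a third element. Take distinct $\mathbf{m}_1,\mathbf{m}_2\in M$ and set $\mathbf{u}=\mathbf{m}_2-\mathbf{m}_1\ne\mathbf{0}$. Then both $\mathbf{m}_2$ and $\sigma(J)-\mathbf{m}_1$ lie in $M\cap(M+\mathbf{u})$. Indeed, $\mathbf{m}_2-\mathbf{u}=\mathbf{m}_1\in M$, and $(\sigma(J)-\mathbf{m}_1)-\mathbf{u}=\sigma(J)-\mathbf{m}_2\in M$ by the symmetry $M=\sigma(J)-M$. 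Since this intersection has at most one element, $\mathbf{m}_2=\sigma(J)-\mathbf{m}_1$. So any two distinct elements of $M$ sum to $\sigma(J)$. A third element $\mathbf{m}_3$ would then give $\mathbf{m}_2=\sigma(J)-\mathbf{m}_1=\mathbf{m}_3$, a contradiction, so $|M|\le2$. This is exactly the paper's argument. There it is phrased as: the ordered pairs $(\mathbf{b},\mathbf{c})$ and $(\sigma(J)-\mathbf{c},\sigma(J)-\mathbf{b})$ in $M^2$ have the same nonzero difference, so they must coincide.
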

\begin{proof}
For the upper bound, zero terms may be deleted without
changing $\Sigma(J)$, so we may assume $|J|=2p-3$. 
For every nonzero $\mathbf x\in\F_p^2$, we then have 
$|J\mathbf x|=2p-2$, with at most $p-1$ terms on each one-dimensional subspace. 
Since $\Sigma(J\mathbf x)=\Sigma(J)\cup(\Sigma(J)+\mathbf x)$,
we have
$$
 \F_p^2\setminus\Sigma(J\mathbf x)=(\F_p^2\setminus\Sigma(J)) \cap((\F_p^2\setminus\Sigma(J))+\mathbf x).
$$ 
Applying Lemma~\ref{lem:matomaki} to $J\mathbf x$ yields $|\Sigma(J\mathbf x)|\ge p^2-1$, and hence, 
\begin{small}
\begin{align}\label{eq.lem35}
 \bigl|\{(\mathbf b,\mathbf c)\in (\F_p^2\setminus\Sigma(J))^2:
            \mathbf b-\mathbf c=\mathbf x\}\bigr|
 =|(\F_p^2\setminus\Sigma(J))\cap((\F_p^2\setminus\Sigma(J))+\mathbf x)|\le1.  
\end{align}
\end{small}
Note that $J\setminus B$ can take all subsequences of $J$ as $B$ runs over all subsequences of $J$  
and $\sigma(J\setminus B)=\sigma(J)-\sigma(B)$. 
Then $\Sigma(J)=\sigma(J)-\Sigma(J)$  
and $\F_p^2\setminus\Sigma(J)=\sigma(J)-(\F_p^2\setminus\Sigma(J))$.
For distinct $\mathbf b,\mathbf c\in (\F_p^2\setminus\Sigma(J))$, 
since the pairs $(\mathbf b,\mathbf c)$ and $(\sigma(J)-\mathbf c,\sigma(J)-\mathbf b)$
have the same nonzero difference, we deduce from \eqref{eq.lem35} that  
$
 (\mathbf b,\mathbf c)
 =(\sigma(J)-\mathbf c,\sigma(J)-\mathbf b).
$ 
It implies that $(\F_p^2\setminus\Sigma(J))\setminus\{\mathbf b\}\subseteq\{\sigma(J)-\mathbf b\}$
for each $\mathbf b\in (\F_p^2\setminus\Sigma(J))$.   
Therefore, $|\F_p^2\setminus\Sigma(J)|\le2$. 

For the attainment of the bound, 
we construct the sequence 
$$J=\mathbf e_1^{[p-2]}\mathbf e_2^{[p-2]}(\mathbf e_1+\mathbf e_2),$$  
where $\mathbf e_1,\mathbf e_2$ form a basis of $\F_p^2$. 
Indeed, $\F_p^2\setminus\Sigma(J)=\{-\mathbf e_1,-\mathbf e_2\}$.  
\end{proof}

We now classify $w$-bad sequences in $\Z^2$ of length $4p-4$,
where $p$ is a prime.
For odd $p$, after a suitable change of coordinates, we split
the sequence into two subsequences of length $2p-2$.
We show that the first coordinates of one subsequence and the
second coordinates of the other form two $w$-bad integer sequences,
to which we apply the rank-one classification restated below.

\begin{lemma}{\rm (\!\! \cite[Theorem~1.2]{S26})}
\label{lem:sun-scalar-classification}
Let $n>1$ be an integer, and let
$A=a_1\cdots a_{2n-2}$ be a sequence in $\Z$ with
$\gcd(a_i,n)=1$ for every $1\le i\le2n-2$.
Then $A$ is $w$-bad if and only if, after reordering,
$
 A\equiv u^{[n-1]}(-u)^{[n-1]}\pmod{n^2}
$
for some $u\in\Z$ with $\gcd(u,n)=1$.
\end{lemma}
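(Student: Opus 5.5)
Sufficiency is a direct check. If $A\equiv u^{[n-1]}(-u)^{[n-1]}\pmod{n^2}$, take a subsequence with $\alpha$ terms $\equiv u$ and $\beta$ terms $\equiv -u\pmod{n^2}$. Its sum is $\equiv(\alpha-\beta)u\pmod{n^2}$. Since $\gcd(u,n)=1$ and $|\alpha-\beta|\le n-1$, this sum vanishes modulo $n$ only when $\alpha=\beta$, and then it also vanishes modulo $n^2$. Hence $A$ is $w$-bad.

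For necessity, assume $A$ is $w$-bad. I first control the residues modulo $n$, then lift to modulo $n^2$.

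\textbf{Step 1 (residues modulo $n$).} I would show that, modulo $n$, the sequence $A$ consists of $n-1$ copies of some unit $\bar u$ and $n-1$ copies of $-\bar u$. Suppose some residue class is taken by a multiset of residues that is not of this form. Split $A=J_1J_2z$ with $|J_1|=|J_2|$ as balanced as possible, and apply Lemma~\ref{lem:two-complete-blocks} with $r=1$. Here $H=\langle \bar z\rangle=\Z_n$ because $z$ is a unit, so $[\Z_n:H]=1$. It is therefore enough to find a single residue in $\Sigma(J_1)\cap(-\Sigma(J_2))$ that is realized by \emph{two different} choices whose sums differ modulo $n^2$. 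That is the wrong formulation, though: condition (i) requires a full coset, which here means all of $\Z_n$.

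A cleaner route is as follows. By Lemma~\ref{lem:cyclic-growth}, any $n-1$ unit terms already have $|\Sigma|=n$. I would put $n-1$ terms in $J_1$, $n-2$ terms in $J_2$ and one term in $z$, so that $|J_1|+|J_2|+1=2n-2$. Then $\Sigma(J_1)=\Z_n$, while $J_2$ misses at most one residue, and the hypothesis of Lemma~\ref{lem:two-complete-blocks}(ii) reads $0+|\Z_n\setminus\Sigma(J_2)|<1$. This fails in general. So the argument must use more than one choice of split: I want $J_2$ with $\Sigma(J_2)=\Z_n$ using only $n-2$ terms. By Lemma~\ref{lem:cyclic-growth}, $\Sigma$ grows by at least one per term, with strict excess whenever a partial sum set is not an arithmetic progression with the right difference. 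So $|\Sigma(J_2)|=n-1$ exactly forces $J_2$ to be "$u$-progression-like", essentially all terms equal to one unit $\bar u$. Running over all splits then forces the residues to be $\pm\bar u$ with multiplicities at most $n-1$; a multiplicity of $n$ would already give an $n$-term subsequence $\bar u^{[n]}$, and I must check that its sum is also nonzero modulo $n^2$. This structural step, showing that tightness in Lemma~\ref{lem:cyclic-growth} forces constant residues, is the main obstacle. I would handle it by an explicit Cauchy--Davenport-type equality analysis (Vosper-style) in $\Z_n$, taking care with non-prime $n$.

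\textbf{Step 2 (lifting to modulo $n^2$).} Once $A\equiv \bar u^{[n-1]}(-\bar u)^{[n-1]}\pmod n$, write $A=x_1\cdots x_{n-1}y_1\cdots y_{n-1}$. Each pair $x_iy_j$ sums to $0$ modulo $n$, so $w$-badness gives $x_i\equiv -y_j\pmod{n^2}$ for all $i,j$. Consequently all the $x_i$ are congruent modulo $n^2$ to a common $u$, and all the $y_j$ are congruent to $-u$. This completes the classification.
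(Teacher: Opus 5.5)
The paper does not prove this lemma; it quotes it from Sun, so your argument has to stand on its own. Your sufficiency argument and Step~2 are correct. The skeleton of Step~1 is also sound. Since $\langle\bar z\rangle=\Z_n$, Lemma~\ref{lem:two-complete-blocks}(i) shows that a $w$-bad $A$ admits no split $A=J_1J_2z$ with $\Sigma(J_1)=\Sigma(J_2)=\Z_n$. Any $n-1$ unit terms give $\Sigma(J_1)=\Z_n$, so every $(n-2)$-term $J_2$ must have $|\Sigma(J_2)|=n-1$, i.e.\ it grows by exactly one at each step.

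\textbf{First gap: the structural claim.} The step that actually carries the classification is left open and is misstated. Tightness does not force ``essentially all terms equal to one unit'': $u^{[a]}(-u)^{[b]}$ with $a+b=n-2$ is tight. Vosper's theorem, a prime-modulus result, is also not the right tool. What you need is elementary. If $S$ is an arithmetic progression with unit difference $c$ and $2\le|S|\le n-2$, then $|S\cup(S+x)|=|S|+1$, equivalently $|S\cap(S+x)|=|S|-1$, holds only for $x=\pm c$. This follows by counting $|S\cap(S+jc)|=\max(0,|S|-j)+\max(0,|S|+j-n)$ for $1\le j\le n-1$. Induct along $\Sigma(c_1\cdots c_k)$, which stays a progression of difference $c_1$; this gives $c_i\equiv\pm c_1\pmod n$. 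For $n\ge4$ any two terms of $A$ lie in a common $J_2$, so all residues are $\pm\bar u$. For $n\le3$ the split argument says nothing, since $|J_2|\le1$ is always tight, although there the residues are $\pm1$ automatically.

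\textbf{Second gap: the multiplicity bound.} This does not come from ``running over all splits'' at all: $\bar u^{[2n-2]}$ passes every split test. It needs its own argument, which you only flag as ``I must check''. For $n\ge3$ (so $\bar u\ne-\bar u$), suppose at least $n$ terms are $\equiv\bar u\pmod n$. There are two cases.
\begin{enumerate}
\item[(a)] Some term $y\equiv-\bar u$ exists. Then the pairs $x_iy$ force all these $x_i$ to be congruent modulo $n^2$.
\item[(b)] Otherwise there are at least $n+1$ such terms. Comparing two $n$-term subsets that differ in one element forces the same congruence.
\end{enumerate}
In both cases, $n$ of these terms sum to $nx$, which vanishes modulo $n$ and hence, by $w$-badness, modulo $n^2$. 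That contradicts $\gcd(x,n)=1$.

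Without these two pieces, the necessity direction is not established; Step~2 is only the easy lifting at the end.
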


\begin{theorem}\label{lem:prime-w2}
Let $p$ be a prime and let $n=p$.
An admissible sequence $A$ in $\Z^2$ of length $4p-4$ is $w$-bad
if and only if, after reordering,
\begin{equation}\label{eq:prime-normal-form}
 A\equiv \mathbf{U}^{[p-1]}(-\mathbf{U})^{[p-1]}
          \mathbf{V}^{[p-1]}(-\mathbf{V})^{[p-1]}\pmod{p^2}
\end{equation}
for some $\mathbf{U},\mathbf{V}\in\Z^2$ with $\det(\mathbf{U},\mathbf{V})\not\equiv0\pmod p$,
where $\det(\mathbf{U},\mathbf{V})$ denotes the determinant of the $2\times2$ matrix with columns $\mathbf{U}$ and $\mathbf{V}$.
\end{theorem}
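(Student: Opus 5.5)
We treat sufficiency and necessity separately. For \emph{sufficiency}, assume \eqref{eq:prime-normal-form}. The matrix $M=(\mathbf U,\mathbf V)$ has determinant prime to $p$, so it is invertible modulo $p$ and modulo $p^2$. A subsequence has sum $\equiv a\mathbf U+b\mathbf V\pmod{p^2}$, where $a=u_+-u_-$ and $b=v_+-v_-$ count the selected copies with sign. In particular $|a|,|b|\le p-1$. If the sum is $\equiv\mathbf 0\pmod p$, invertibility gives $a\equiv b\equiv0\pmod p$, hence $a=b=0$, and the sum is $\equiv\mathbf 0\pmod{p^2}$. This is the same argument as in Lemma~\ref{lem:axis} after a change of coordinates.

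For \emph{necessity} with $p=2$, I would invoke Theorem~\ref{thm:binary}. There the length is $4=d+k$ with $k\le d\le2$, which forces $d=k=2$. The sets $S_1,S_2$ are then disjoint nonempty subsets of $[2]$, hence singletons, so $\mathbf y_j\equiv-\mathbf x_j\pmod4$. Taking $\mathbf U=\mathbf x_1$ and $\mathbf V=\mathbf x_2$ gives \eqref{eq:prime-normal-form}.

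For odd $p$ the plan has three steps.

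\textbf{Step 1: each line carries at most $2p-2$ terms.} Fix a line $L\subseteq\F_p^2$ and apply a unimodular change of coordinates so that $L$ is the first axis modulo $p$. The terms whose residues lie on $L$ then have first coordinate prime to $p$ and second coordinate divisible by $p$. Suppose a subsequence of these terms has first-coordinate sum $\equiv0\pmod p$. Then its full sum is $\equiv\mathbf 0\pmod p$. By $w$-badness that sum is $\equiv\mathbf 0\pmod{p^2}$, so in particular the first-coordinate sequence is $w$-bad in $\Z$. Since $w_1(p)=2p-1$ (the lower bound is Lemma~\ref{lem:axis} and the upper bound follows from Lemma~\ref{lem:sun-scalar-classification}), at most $2p-2$ terms lie on $L$.

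\textbf{Step 2: exactly two lines carry $2p-2$ terms each.} I would show that two lines $L_1\neq L_2$ each carry exactly $2p-2$ terms, which exhausts all $4p-4$ terms. Suppose not. I would then try to write $A=J_1J_2\mathbf z\mathbf w$ with $|J_1|=|J_2|=2p-3$ and at most $p-2$ terms of each $J_i$ on every line. Lemma~\ref{lem:two-missing} gives $|\F_p^2\setminus\Sigma(J_i)|\le2$. Dropping $\mathbf w$, Lemma~\ref{lem:two-complete-blocks}(ii) with $H=\langle\mathbf z\bmod p\rangle$, where $[\F_p^2:H]=p$, applies as soon as $4<p$, contradicting $w$-badness. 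Such a balanced split exists provided no line carries too many terms. Using Step 1 and a greedy distribution of the line classes between $J_1$ and $J_2$, the obstruction should be exactly the configuration with two full lines.

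This step is the main obstacle, in two respects. First, the counting must handle lines carrying between $p-1$ and $2p-2$ terms, which can spoil the split. In that case I would first try to use a heavy line directly: put exactly $p-1$ of its terms into each $J_i$ and use Lemma~\ref{lem:cyclic-growth} on the remaining directions to show that $\Sigma(J_i)$ is still large. Second, the case $p=3$ escapes the inequality $4<p$ and needs separate treatment. Since it involves only $8$ terms in $\F_3^2$, I would use Lemma~\ref{lem:two-complete-blocks}(i) with a specific coset, or a finite case check.

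\textbf{Step 3: reconstruct the normal form.} Once $A$ splits as $B_1B_2$ with $B_i$ on $L_i$ and $|B_i|=2p-2$, choose coordinates with $L_1$ and $L_2$ as the two axes modulo $p$. By Step 1 the relevant coordinate of $B_1$ is $w$-bad of length $2p-2$. Lemma~\ref{lem:sun-scalar-classification} then gives first coordinates $\equiv u^{[p-1]}(-u)^{[p-1]}\pmod{p^2}$. To pin down the full vectors modulo $p^2$, pair any term with first coordinate $u$ with any term with first coordinate $-u$. Their sum is $\equiv\mathbf 0\pmod p$, hence $\equiv\mathbf 0\pmod{p^2}$. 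Running over all such pairs shows that all "$+$" terms are congruent modulo $p^2$ to a single $\mathbf U$ and all "$-$" terms to $-\mathbf U$. The same argument on $B_2$ produces $\mathbf V$. Finally, $\det(\mathbf U,\mathbf V)\not\equiv0\pmod p$ because $L_1\ne L_2$.
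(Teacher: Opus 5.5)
Your sufficiency argument, the case $p=2$, Step~1 and Step~3 are correct and essentially agree with the paper. The gap is Step~2.

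\textbf{Your split misses an obstruction.} In your symmetric split $A=J_1J_2\mathbf z\mathbf w$, with $|J_1|=|J_2|=2p-3$ and at most $p-2$ terms of each $J_i$ on a line, every line contributes at most $2p-4$ terms to $J_1J_2$. A line with $2p-2$ terms therefore uses up both spare slots $\mathbf z,\mathbf w$, and a line with $2p-3$ terms needs one of them. Hence the line counts $(2p-2,\,2p-3,\,1)$, which are compatible with Step~1, also admit no such split. Your claim that the only obstruction is two full lines is false, and this configuration is never excluded.

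\textbf{The fallback is too weak.} Once some $J_i$ carries $p-1$ terms on a line, Lemma~\ref{lem:two-missing} no longer applies. Lemma~\ref{lem:cyclic-growth} then only gives $|\Sigma(J_i)|\ge p(p-1)$, i.e.\ up to $p$ missing residues per block. Lemma~\ref{lem:two-complete-blocks}(ii) needs fewer than $p$ missing residues in total.

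\textbf{The case $p=3$ is open.} For $p=3$ the inequality $2+2<p$ fails outright, and ``a finite case check'' is not an argument.

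\textbf{How the paper avoids this.} It first proves only that \emph{one} line is full. Assuming every line carries at most $2p-3$ terms, it uses an asymmetric split $A=J_1J_2\mathbf z$:
\begin{itemize}
\item $|J_1|=2p-2$ with at most $p-1$ terms per line, so Lemma~\ref{lem:matomaki} leaves at most one residue missing;
\item $|J_2|=2p-3$ with at most $p-2$ terms per line, so at most two residues are missing.
\end{itemize}
This gives $3<p$ for $p\ge5$. For $p=3$ it takes $J_1$ to be two terms on one line and two off it, so $\Sigma(J_1)=\F_3^2$ by Lemma~\ref{lem:cyclic-growth}.

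It then handles the complement $A'$ of a full line $A_L$, in coordinates where $L$ is the first axis. It shows that every $T\subseteq A'$ with $\sigma(T)_2\equiv0\pmod p$ also has $\sigma(T)_1\equiv0\pmod p$. Otherwise, appending the integer $\sigma(T)_1$ to the first coordinates of $A_L$ would give a $w$-bad integer sequence of length $2p-1$, contradicting $w_1(p)=2p-1$. So the second coordinates of $A'$ form a $w$-bad sequence of length $2p-2$. Lemma~\ref{lem:sun-scalar-classification} and your pairing argument then force $A'\equiv\mathbf V^{[p-1]}(-\mathbf V)^{[p-1]}\pmod{p^2}$. The second full line thus comes for free, and mixed configurations such as $(2p-2,2p-3,1)$ never have to be treated combinatorially.

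If you prefer to stay with your combinatorial route, switching to this asymmetric split (with the $p=3$ arrangement above) does dispose of every configuration other than two full lines. You would still have to write out that allocation and the case $p=3$ in full.
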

\begin{proof}
For \emph{necessity}, suppose that $A$ is $w$-bad.
If $p=2$, the desired result follows from Theorem~\ref{thm:binary} directly.
Henceforth, $p\ge3$.
For each one-dimensional subspace $L\le\F_p^2$, let $A_L$ be the
subsequence of terms of $A$ whose residues modulo $p$ lie in $L$.

We first show that $|A_L|=2p-2$ for some $L$.
For any fixed $L$, we further assume without loss of generality that
$L=\{(t,0):~t\in\F_p\}$, since invertible coordinate changes
modulo $p^2$ preserve admissibility and $w$-badness.
Let $\sigma(T)_i$ denote the $i$th coordinate of $\sigma(T)$.
Since $\sigma(T)_2\equiv0\pmod p$ for every
nonempty subsequence $T$ of $A_L$,
the $w$-badness of $A$ gives the deduction link 
\begin{equation}\label{eq:first-coordinate-lift}
 \sigma(T)_1\equiv0\pmod p
 \ \Longrightarrow\ \sigma(T)\equiv\mathbf0\pmod p
 \ \Longrightarrow\ \sigma(T)\equiv\mathbf0\pmod{p^2}.
\end{equation}
The first coordinates therefore form an admissible integer 
$w$-bad sequence. Combining with the equality
$w_1(p)=2p-1$ established in \cite[Theorem~1.1]{S26}, we have $|A_L|\le2p-2$.

For a contradiction, suppose that $|A_L|\le2p-3$ for every $L$.
Since $3(2p-3)>4p-4$, there are at most two subspaces
with $|A_L|=2p-3$.
For a term $\mathbf z$ whose residue modulo $p$ lies in a subspace
with largest $|A_L|$, let $c_L$ count the remaining terms
whose residues modulo $p$ lie in $L$.
Then $\sum_Lc_L=4p-5$ and $c_L\le2p-3$,
with equality for at most one $L$. 
In the following, we partition $A=J_1J_2\mathbf z$ 
with $|J_1|=2p-2$ and $|J_2|=2p-3$, 
and show that $A$ is not $w$-bad, a contradiction.  

{\textbf{Case 1: $p=3$.}}
For a subspace $L$ with largest count, we have $c_L\in\{2,3\}$.
If $c_L=3$, then $J_2$ consists of one term whose residue lies
in $L$ and one from each of two other nonempty residue groups.
If $c_L=2$, the other three counts are $(2,2,1)$, and $J_2$
consists of one term from each of these three groups.
In either case, the remaining four terms form $J_1=BC$,
where $B$ consists of two terms whose residues lie in $L$ 
and $C$ consists of two terms whose residues lie outside $L$.
Thus, $A=J_1J_2\mathbf z$ and $|B|=|C|=2$. 
Since both $L$ and $\F_3^2/L$ are cyclic groups of order $3$,
it follows from Lemma~\ref{lem:cyclic-growth} that
$\Sigma(B)=L$ and $\Sigma(C)$ maps onto $\F_3^2/L$. 
Hence, $\Sigma(J_1)=\F_3^2$.
Since the three terms of $J_2$ have residues in distinct
one-dimensional subspaces, it follows from Lemma~\ref{lem:two-missing} that 
$|\F_3^2\setminus\Sigma(J_2)|\le2$.

{\textbf{Case 2: $p\ge5$.}}
For each $L$, we assign $\lfloor \frac{c_L}{2}\rfloor$ terms to each
subsequence.
The remaining terms from groups with odd $c_L$ are assigned
alternately to $J_1$ and $J_2$, starting with $J_1$ and placing
$c_L=2p-3$ first if it occurs.
Thus, $A=J_1J_2\mathbf z$.
Since $\sum_Lc_L=4p-5$ is odd, there are an odd number of
remaining terms and $|J_1|-|J_2|=1$.
Together with $|J_1|+|J_2|=4p-5$, it forces 
$|J_1|=2p-2$ and $|J_2|=2p-3$.
By construction, each one-dimensional subspace contains
the residues of at most $p-1$ terms of $J_1$ and at most
$p-2$ terms of $J_2$. 
By Lemmas~\ref{lem:matomaki} and~\ref{lem:two-missing},
we therefore have $|\F_p^2\setminus\Sigma(J_1)|\le1$ and
$|\F_p^2\setminus\Sigma(J_2)|\le2$, respectively.

Since $\mathbf z\bmod p\ne\mathbf0$, the subgroup
$\langle\mathbf z\bmod p\rangle$ has $p$ elements.
In either case, we conclude that
\[
 |\F_p^2\setminus\Sigma(J_1)|+|\F_p^2\setminus\Sigma(J_2)|
 <p=[\F_p^2:~\langle\mathbf z\bmod p\rangle].
\]
By Lemma~\ref{lem:two-complete-blocks}(ii),
$A$ is not $w$-bad, a contradiction.
There is therefore a subspace $L$ with $|A_L|=2p-2$.

We next determine the structure of $A$ modulo $p^2$.
With $A'=A\setminus A_L$ and $L=\{(t,0):~t\in\F_p\}$,
we have $A=A_LA'$ and $|A_L|=|A'|=2p-2$.
Recall that every term of $A_L$ has first coordinate nonzero
and second coordinate zero modulo $p$, and that
\eqref{eq:first-coordinate-lift} holds for every nonempty
subsequence $T$ of $A_L$. 

For $A'$, every term has second coordinate nonzero modulo $p$,
since its residue lies outside $L$.
For any nonempty subsequence $T$ of $A'$ whose second-coordinate
sum satisfies $\sigma(T)_2\equiv0\pmod p$,
suppose, for a contradiction, that
$\sigma(T)_1\not\equiv0\pmod p$.
Then the first coordinates of $A_L$, together with the appended
term $\sigma(T)_1$, form an admissible sequence in $\Z$
of length $2p-1$.
For any nonempty subsequence of this integer sequence with sum
zero modulo $p$, let $R$ consist of the terms of $A_L$
whose first coordinates are selected.
The resulting subsequence of $A$ is $R$ if the appended term
is not selected, and $RT$ otherwise.
In either case, it is nonempty and has both coordinate sums
zero modulo $p$, that is, its vector sum is zero modulo $p^2$
by the $w$-badness of $A$. 
Thus, the integer sequence is $w$-bad, contrary to the former fact $w_1(p)=2p-1$.
Consequently, $\sigma(T)_1\equiv0\pmod p$, and the $w$-badness
of $A$ gives, similarly to~\eqref{eq:first-coordinate-lift},   
\begin{equation}\label{eq:second-coordinate-lift}
 \sigma(T)_2\equiv0\pmod p
 \ \Longrightarrow\ \sigma(T)\equiv\mathbf0\pmod p
 \ \Longrightarrow\ \sigma(T)\equiv\mathbf0\pmod{p^2}.
\end{equation}

Applying Lemma~\ref{lem:sun-scalar-classification} to the first
coordinates of $A_L$ and the second coordinates of $A'$,
we obtain the coordinate values $\pm u$ and $\pm v$, respectively,
modulo $p^2$.
By~\eqref{eq:first-coordinate-lift}, any two terms of $A_L$
with opposite first coordinates sum to $\mathbf0$ modulo $p^2$.
Hence, all terms with first coordinate $u$ equal a fixed vector
$\mathbf U$ modulo $p^2$, and those with first coordinate $-u$
equal $-\mathbf U$.
Similarly, by~\eqref{eq:second-coordinate-lift}, the terms of $A'$
with second coordinates $v$ and $-v$ equal $\mathbf V$ and
$-\mathbf V$, respectively, modulo $p^2$. 
Thus,
\[
 A=A_LA'
 \equiv\mathbf U^{[p-1]}(-\mathbf U)^{[p-1]}
       \mathbf V^{[p-1]}(-\mathbf V)^{[p-1]}\pmod{p^2},
\]
where $\mathbf U=(U_1,U_2)$ and $\mathbf V=(V_1,V_2)$ satisfy
$U_2\equiv0\pmod p$ and $U_1,V_2\not\equiv0\pmod p$.
Hence,
$
 \det(\mathbf U,\mathbf V)
 =U_1V_2-U_2V_1
 \equiv U_1V_2\not\equiv0\pmod p.
$ 
This proves necessity. 

For \emph{sufficiency}, after reordering, we have
$A\equiv (\mathbf U,\mathbf V) E_{2,p}\pmod{p^2}$, where
\[
 E_{2,p}
 =\mathbf e_1^{[p-1]}(-\mathbf e_1)^{[p-1]}
  \mathbf e_2^{[p-1]}(-\mathbf e_2)^{[p-1]} 
\] is as in Lemma~\ref{lem:axis}  
and  $(\mathbf U,\mathbf V) E_{2,p}$ denotes the sequence obtained by applying 
$(\mathbf U,\mathbf V)$ to each term of $E_{2,p}$.
Since $\det(\mathbf U,\mathbf V)\not\equiv0\pmod p$, the matrix $(\mathbf U,\mathbf V)$ is invertible
modulo $p^2$.
It follows directly from Lemma~\ref{lem:axis} that $A$ is
admissible and $w$-bad.
This completes the proof.
\end{proof}

\begin{corollary}\label{cor:prime-exact}
For every prime $p$, we have $w_2(p)=4p-3$.
\end{corollary}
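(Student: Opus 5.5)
The corollary needs two bounds, $w_2(p)\ge 4p-3$ and $w_2(p)\le 4p-3$. The lower bound comes directly from Lemma~\ref{lem:axis} with $r=2$ and $n=p$. That lemma gives the $w$-bad sequence $E_{2,p}$ of length $4(p-1)=4p-4$, so $w_2(p)\ge 4p-3$.

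For the upper bound, I will show that every admissible sequence $A$ in $\Z^2$ of length $4p-3$ fails to be $w$-bad. Suppose instead that $A$ is $w$-bad. Every subsequence of a $w$-bad sequence is again $w$-bad, because its nonempty subsequences are nonempty subsequences of $A$. Deleting any single term $\mathbf a$ therefore leaves a $w$-bad sequence of length $4p-4$. By Theorem~\ref{lem:prime-w2}, after reordering,
\[
A\setminus\mathbf a\equiv \mathbf U^{[p-1]}(-\mathbf U)^{[p-1]}\mathbf V^{[p-1]}(-\mathbf V)^{[p-1]}\pmod{p^2},
\]
where $\det(\mathbf U,\mathbf V)\not\equiv0\pmod p$. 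Applying this invertible matrix modulo $p^2$, as in the sufficiency part of that theorem, I may assume $\mathbf U=\mathbf e_1$ and $\mathbf V=\mathbf e_2$ modulo $p^2$. The extra term is then $\mathbf a\equiv(\alpha,\beta)\pmod{p^2}$ with $(\alpha,\beta)\not\equiv\mathbf0\pmod p$.

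The key step is to produce a subsequence that vanishes modulo $p$ but not modulo $p^2$. Choose integers $x,y\in[0,p-1]$ with $x\equiv-\alpha$ and $y\equiv-\beta\pmod p$. Take $\mathbf a$ together with $x$ copies of $\mathbf e_1$ and $y$ copies of $\mathbf e_2$; these copies are available because each of $\mathbf e_1,\mathbf e_2$ occurs $p-1$ times. The sum of this subsequence is $(\alpha+x,\beta+y)$ modulo $p^2$, and it is $\equiv\mathbf0\pmod p$. By $w$-badness it must be $\equiv\mathbf0\pmod{p^2}$.

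Now compare with a second subsequence. Replace the choice $x$ by $x'=x+p$ when $x'\le p-1$; this never happens, so instead switch sign. If $x\ne0$, use the $-\mathbf e_1$ copies: take $p-x\in[1,p-1]$ copies of $-\mathbf e_1$ in place of the $x$ copies of $\mathbf e_1$. The first coordinate becomes $\alpha-(p-x)=(\alpha+x)-p$. This differs by $p$ from the value $\alpha+x\equiv0\pmod{p^2}$ forced above, so it is nonzero modulo $p^2$. The new subsequence still vanishes modulo $p$, contradicting $w$-badness. If $x=0$, then $\alpha\equiv0\pmod p$, so $\beta\not\equiv0$ and $y\ne0$, and the same switch applies to the $\mathbf e_2$ copies. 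Thus no $w$-bad sequence of length $4p-3$ exists, and $w_2(p)\le 4p-3$.

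The only delicate point is confirming that the needed multiplicities are available and that the switched subsequence differs by exactly $p$ in one coordinate. Both hold because $p-1$ copies of each of $\pm\mathbf e_1,\pm\mathbf e_2$ are present.
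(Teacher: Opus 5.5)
Your proof is correct, but it reaches the upper bound by a different route from the paper's.

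\textbf{The paper's route.} It applies Theorem~\ref{lem:prime-w2} twice. First, applied to $A\setminus\mathbf z$, it shows that the residues modulo $p$ fall into four classes of exactly $p-1$ terms each. Second, it is applied to $A\setminus\mathbf x$ for a term $\mathbf x\not\equiv\mathbf z\pmod p$. There the class of $\mathbf x$ has only $p-2\ge1$ terms, which contradicts the normal form. This argument uses only the multiplicity pattern of the normal form, not its values modulo $p^2$, and needs no change of coordinates. However, it requires $\pm\mathbf U$ to be distinct residues, so $p$ must be odd, and the paper handles $p=2$ separately through Theorem~\ref{thm:binary}.

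\textbf{Your route.} You apply the classification only once. You then normalize by the inverse of $(\mathbf U,\mathbf V)$ modulo $p^2$. This step is legitimate, and you should say why: an integer matrix invertible modulo $p^2$ preserves both conditions ``$\equiv\mathbf 0\pmod p$'' and ``$\equiv\mathbf 0\pmod{p^2}$'' for every subsequence sum, so it preserves admissibility and $w$-badness. You then exhibit two explicit subsequences containing the extra term $\mathbf a$. Both have sum zero modulo $p$, and their sums differ by $p\mathbf e_i$ modulo $p^2$, so $w$-badness cannot force both to vanish modulo $p^2$.

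\textbf{What each buys.} Your approach gives a concrete witness subsequence. It also works uniformly for every prime, including $p=2$: Theorem~\ref{lem:prime-w2} still holds there by Theorem~\ref{thm:binary}, and one copy of each $\pm\mathbf e_i$ suffices for $x,y\in\{0,1\}$. The paper's approach avoids the coordinate normalization and the explicit computation of sums modulo $p^2$.

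The aside about replacing $x$ by $x+p$ plays no role and can be deleted.
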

\begin{proof}
When $p=2$, the result follows from Theorem~\ref{thm:binary} that $w_2(2)=5$.  

For an odd prime $p$, suppose that there exists an admissible sequence $A$ with $|A|=4p-3$, 
which is $w$-bad modulo $p$.
For any term $\mathbf z$ of $A$, it follows from
Theorem~\ref{lem:prime-w2} that $A\setminus\mathbf z$ has four
residue classes modulo $p$, each containing $p-1$ terms.
There is therefore a term $\mathbf x$ of $A\setminus\mathbf z$
with $\mathbf x\not\equiv\mathbf z\pmod p$ and the admissible sequence 
$A\setminus\mathbf x$ is still $w$-bad modulo $p$. 
However, the residue class of $\mathbf x$ only contains exactly $p-2$ terms in $A\setminus\mathbf x$, 
contrary to Theorem~\ref{lem:prime-w2} since $1\le p-2<p-1$.
Therefore, there is no admissible $w$-bad sequence of length
$4p-3$, yielding $$w_2(p)\le4p-3.$$ 
 
By taking $n=p$ in Lemma~\ref{lem:axis}, we also obtain the lower bound $w_2(p)\ge4p-3$. 
As a result, $w_2(p)=4p-3$ holds for any prime $p$.
\end{proof}

\subsection{Proofs of Theorems~\ref{thm:exact}(ii) and \ref{thm:exact}(iii)}
We now prove Theorem~\ref{thm:exact}(ii) and then combine it
with Corollary~\ref{cor:prime-exact} to obtain part~(iii). 
For the part of the tripling inequality in Theorem~\ref{thm:exact}(ii), 
we also need the following length bound
and the structure of sequences attaining it.

\begin{lemma}\label{lem:ternary-short}
Let $A$ be an admissible sequence in $\Z^2$ for $n=3$.
Suppose that every nonempty subsequence $B$ of $A$ with $2\leq |B|\le3$
and $\sigma(B)\equiv\mathbf0\pmod3$ satisfies
$\sigma(B)\equiv\mathbf0\pmod9$.
Then $|A|\le12$. 
In particular, if $|A|=12$, there are $\mathbf U,\mathbf V\in\Z^2$
with $\det(\mathbf U,\mathbf V)\not\equiv0\pmod3$ such that,
up to reordering,
$$
 A\equiv\mathbf U^{[2]}(-\mathbf U)^{[2]}
 \mathbf V^{[2]}(-\mathbf V)^{[2]}
 (\mathbf U+\mathbf V)^{[2]}(-\mathbf U-\mathbf V)^{[2]}
 \pmod9.
$$
\end{lemma}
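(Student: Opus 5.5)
We group the terms of $A$ by their residue modulo $3$. For each nonzero $\mathbf g\in\F_3^2$, let $A_{\mathbf g}$ be the subsequence of terms with residue $\mathbf g$. The eight nonzero residues form four lines $L=\{\pm\mathbf g\}$, and we write $A_L=A_{\mathbf g}A_{-\mathbf g}$. The hypothesis only involves subsequences of length $2$ or $3$ with sum zero modulo $3$. For a length-$2$ subsequence $\mathbf a\mathbf b$, this means $\mathbf b\equiv-\mathbf a\pmod 3$. For a length-$3$ subsequence, it means either three terms with the same residue, or three residues $\mathbf g,\mathbf h,-\mathbf g-\mathbf h$ lying on three distinct lines.

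\textbf{Step 1: a bound on each class.} Suppose some class satisfies $|A_{\mathbf g}|\ge3$. Any three of its terms sum to zero modulo $9$. Comparing two such triples that share two terms shows that all terms of $A_{\mathbf g}$ are congruent to a common $\mathbf x$ modulo $9$. Then $3\mathbf x\equiv\mathbf0\pmod 9$, so $\mathbf x\equiv\mathbf0\pmod3$, contradicting admissibility. Hence $|A_{\mathbf g}|\le2$.

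\textbf{Step 2: a bound on each line.} If both $A_{\mathbf g}$ and $A_{-\mathbf g}$ are nonempty, every pair $\mathbf a\in A_{\mathbf g}$, $\mathbf b\in A_{-\mathbf g}$ satisfies $\mathbf b\equiv-\mathbf a\pmod 9$. So the terms of $A_{\mathbf g}$ all agree modulo $9$, say with $\mathbf U$, and the terms of $A_{-\mathbf g}$ all equal $-\mathbf U$ modulo $9$. In every case $|A_L|\le4$, which already gives $|A|\le16$.

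\textbf{Step 3: from $16$ to $12$.} To sharpen the bound, we look at triples drawn from three distinct lines. Any three lines of $\F_3^2$ contain a zero-sum triple of residues $\mathbf g+\mathbf h+\mathbf k=\mathbf0$, obtained by choosing signs suitably. Suppose all four lines are "full", meaning both classes on the line are nonempty. Step 2 assigns a well-defined value modulo $9$ to each class, say $\pm\mathbf U,\pm\mathbf V,\pm\mathbf W,\pm\mathbf X$. Write the four lines as those of $\mathbf g,\mathbf h,\mathbf g+\mathbf h,\mathbf g-\mathbf h$. Choosing the lifts consistently, the zero-sum triples give the congruences
\[
\mathbf W\equiv\mathbf U+\mathbf V,\quad \mathbf X\equiv\mathbf U-\mathbf V,\quad \mathbf U+\mathbf W+\mathbf X\equiv\mathbf0 \pmod 9,
\]
the last coming from the triple $\mathbf g+(\mathbf g+\mathbf h)+(\mathbf g-\mathbf h)\equiv\mathbf0$. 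Substituting the first two into the third yields $3\mathbf U\equiv\mathbf0\pmod9$, a contradiction. So at most three lines are full.

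A line that is not full contributes at most $2$ terms, because only one class on it is nonempty. This gives $|A|\le 3\cdot4+2=14$, so we must also exclude counts of $13$ and $14$. For this we mix in a "half" line, and the key obstacle is doing so. Suppose the lines of $\mathbf g$ and $\mathbf h$ are full with values $\pm\mathbf U$ and $\pm\mathbf V$, and the line of $\mathbf g-\mathbf h$ carries some term $\mathbf y$. The triples $(\mathbf y,-\mathbf g\text{-term},\mathbf h\text{-term})$ and $(-\mathbf y\text{ side})$ pin $\mathbf y\equiv\mathbf U-\mathbf V$, and a further triple with the fourth line or with a third full line then forces the same $3\mathbf U\equiv\mathbf0$ contradiction. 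The plan is to run this case analysis on the configuration of full and half lines and show that $3$ full lines together with any term on the fourth line is impossible. If that fails, the fallback is to show that two full lines plus two half lines gives at most $4+4+2+2=12$, with equality only when the half-line values are incompatible.

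\textbf{Step 4: the equality case.} When $|A|=12$, the case analysis should leave exactly three full lines of size $4$, spanned by $\mathbf g$, $\mathbf h$ and $\mathbf g+\mathbf h$ with the correct sign choice. The triple relation then gives $\mathbf W\equiv\mathbf U+\mathbf V\pmod 9$, which is exactly the displayed normal form. Finally, $\det(\mathbf U,\mathbf V)\not\equiv0\pmod 3$ holds because $\mathbf g$ and $\mathbf h$ span different lines.
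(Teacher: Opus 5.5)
Your Step 1 has a genuine gap, and the rest of the argument depends on it.

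\textbf{Step 1 proves less than it claims.} Comparing two triples that share two terms needs at least four terms in $A_{\mathbf g}$. When $|A_{\mathbf g}|=3$ there is only one triple, and the hypothesis gives only $\mathbf x_1+\mathbf x_2+\mathbf x_3\equiv\mathbf 0\pmod 9$. That does not force the three terms to agree modulo $9$. In fact the unconditional claim $|A_{\mathbf g}|\le 2$ is false: $A=(1,0)(1,0)(7,0)$ satisfies the hypothesis.

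So the local argument yields only $|A_{\mathbf g}|\le 3$. Step 2 does give at most two terms per class on a full line, but on a non-full line you only get three. Consequently:
\begin{itemize}
\item the count in Step 3 degrades to $3\cdot4+3=15$;
\item configurations with three-term classes are never addressed.
\end{itemize}
For example, take four lines each carrying a single residue class of three terms. This has exactly $12$ terms and is not of the claimed normal form. Step 2 and Step 3 only act when full lines are present, so nothing in your Steps 2--4 excludes it. Hence neither $|A|\le 12$ nor the equality classification follows.

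\textbf{The missing idea is a global use of the other terms.} Suppose $\mathbf x_1,\mathbf x_2,\mathbf x_3$ share residue $\mathbf u$, and some subsequence $R$ disjoint from them has $1\le|R|\le 2$ and $\sigma(R)\equiv-\mathbf u\pmod 3$. Then each $\mathbf x_i\equiv-\sigma(R)\pmod 9$, hence $3\mathbf x_1\equiv\mathbf 0\pmod 9$, a contradiction. Therefore a three-term class forces two things:
\begin{itemize}
\item $-\mathbf u\notin S$;
\item at most one residue from each of the three pairs $\{\mathbf v,-\mathbf u-\mathbf v\}$, $\{-\mathbf v,-\mathbf u+\mathbf v\}$, $\{\mathbf u+\mathbf v,\mathbf u-\mathbf v\}$ lies in $S$.
\end{itemize}
This gives $|S|\le 4$ and $|A|\le 3|S|\le 12$.

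At $|A|=12$ every class would then have exactly three terms. The conditions $S\cap(-S)=\varnothing$ and $-\mathbf a-\mathbf b\notin S$ for distinct $\mathbf a,\mathbf b\in S$ force $S=\{\mathbf u,\mathbf v,\mathbf u+\mathbf v,\mathbf u-\mathbf v\}$. This is contradicted by $-(\mathbf u+\mathbf v)-(\mathbf u-\mathbf v)=\mathbf u$. This is how the paper shows that all classes have at most two terms once $|A|\ge 12$.

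\textbf{Step 3 and Step 4 must be made definite.} Only after the reduction above do your later steps become usable, and they should not remain a plan with a fallback. The fact you need, which your sketch essentially contains, is this: two full lines $\langle\mathbf g\rangle,\langle\mathbf h\rangle$, together with at least one term on each of $\langle\mathbf g+\mathbf h\rangle$ and $\langle\mathbf g-\mathbf h\rangle$, always produce $3\mathbf U\equiv\mathbf 0$ or $3\mathbf V\equiv\mathbf 0\pmod 9$.

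This rules out your ``two full lines plus two half lines'' configuration outright; it is impossible, not an equality case with incompatible values. It also leaves three full lines $\langle\mathbf g\rangle,\langle\mathbf h\rangle,\langle\mathbf g+\mathbf h\rangle$ as the only $12$-term configuration. That configuration gives the stated normal form.
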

\begin{proof}
The bound is immediate if $|A|<12$, so we may assume that
$|A|\ge12$. 
Let
\[
 S=\{\mathbf a\bmod3:~\mathbf a\text{ is a term of }A\}
 \subseteq\F_3^2\setminus\{\mathbf0\}.
\]
We first show that each element of $S$ is represented by at most
two terms of $A$.

Suppose that four terms $\mathbf x_1,\ldots,\mathbf x_4$ of $A$ have
the same residue $\mathbf u\in S$ modulo $3$. 
Then $\sum_{j\ne i}\mathbf x_j\equiv\mathbf0\pmod9$
for $1\le i\le4$. 
By subtraction, we have $\mathbf x_1\equiv\mathbf x_2\equiv\mathbf x_3\equiv\mathbf x_4\pmod9$ 
and hence, $3\mathbf x_1\equiv\mathbf0\pmod9$, contrary to admissibility. 
Thus, each element of $S$ is represented by at most three terms of $A$.

Suppose that three terms $\mathbf x_1,\mathbf x_2,\mathbf x_3$ of $A$
have the same residue $\mathbf u\in S$ modulo $3$. 
If a subsequence $R$ of $A$, disjoint from these three terms,
satisfies $1\le|R|\le2$ and $\sigma(R)\equiv-\mathbf u\pmod3$,
then
$\mathbf x_1\equiv\mathbf x_2\equiv\mathbf x_3
\equiv-\sigma(R)\pmod9$.  
Again, we have $3\mathbf x_1\equiv\mathbf0\pmod9$, contrary to admissibility. 
Consequently, $-\mathbf u\notin S$.
For any $\mathbf v\in\F_3^2\setminus\langle\mathbf u\rangle$,
the set $\F_3^2\setminus\{\mathbf0,\mathbf u,-\mathbf u\}$
can be partitioned into the three pairs
$
 \{\mathbf v,-\mathbf u-\mathbf v\},$ 
 $\{-\mathbf v,-\mathbf u+\mathbf v\},$ and 
$\{\mathbf u+\mathbf v,\mathbf u-\mathbf v\}.
$
Since each pair $\{\mathbf a,\mathbf b\}$ satisfies
$\mathbf a+\mathbf b=-\mathbf u$, 
$|S\cap\{\mathbf a,\mathbf b\}|\le1$. 
Together with $\mathbf u\in S$, it forces 
$
 |S|=1+|S\setminus\{\mathbf u\}|\le1+3=4.
$ 
Then 
$
 12\le|A|\le3|S|\le12.
$
Therefore, $|S|=4$ and each element of $S$ is represented
by exactly three terms of $A$.
The exclusion of $R$ applies to each of these residues, giving 
\begin{align}\label{eq.lem39}
S\cap(-S)=\varnothing,\qquad 
-\mathbf a-\mathbf b\notin S,~\forall~ \mathbf a,\mathbf b\in S~{\rm with}~\mathbf a\ne\mathbf b.  
\end{align}
As a result, all of the four elements of $S$ lie in distinct
one-dimensional subspaces of $\F_3^2$.
For distinct $\mathbf u,\mathbf v\in S$, the other two elements
belong respectively to $\{\pm(\mathbf u+\mathbf v)\}$ and
$\{\pm(\mathbf u-\mathbf v)\}$.
Since $-\mathbf u-\mathbf v\notin S$, after interchanging
$\mathbf u,\mathbf v$ if necessary, we have
$S=\{\mathbf u,\mathbf v,\mathbf u+\mathbf v,\mathbf u-\mathbf v\}$.
However,
$
 -(\mathbf u+\mathbf v)-(\mathbf u-\mathbf v)=\mathbf u\in S,
$
contrary to the condition in \eqref{eq.lem39}. 
This contradiction shows that each element of $S$ is represented
by at most two terms of $A$. 

We now have that $|S|\ge|A|/2\ge6$. 
Since the total number of one-dimensional subspaces of $\F_3^2$ is four, 
there are at least two one-dimensional subspaces of $\F_3^2$ that contain both nonzero
elements in $S$.
There are therefore terms $\mathbf U,\mathbf V$ of $A$ with
linearly independent residues
$\mathbf u=\mathbf U\bmod3$ and $\mathbf v=\mathbf V\bmod3$
such that $\{\pm\mathbf u,\pm\mathbf v\}\subseteq S$.
Since $|S|\ge6$, at least one of
$\pm\mathbf u\pm\mathbf v$ belongs to $S$.
After changing the signs of $\mathbf u,\mathbf v$ and choosing
terms $\mathbf U,\mathbf V$ with these residues, we may assume
that $\mathbf u+\mathbf v\in S$.

Under the hypothesis required for the lemma, 
by applying the hypothesis to two-term subsequences, as in the
proof of Theorem~\ref{lem:prime-w2}, and then to three-term
subsequences, we obtain, for every term $\mathbf x$ of $A$
and $\varepsilon\in\{1,-1\}$,
\begin{align}\label{eq.lem3922}
 \begin{aligned}
 \mathbf x\bmod3=\varepsilon\mathbf u
 &\ \Longrightarrow\
 \mathbf x\equiv\varepsilon\mathbf U\pmod9,\\
 \mathbf x\bmod3=\varepsilon\mathbf v
 &\ \Longrightarrow\
 \mathbf x\equiv\varepsilon\mathbf V\pmod9,\\
 \mathbf x\bmod3=\varepsilon(\mathbf u+\mathbf v)
 &\ \Longrightarrow\
 \mathbf x\equiv\varepsilon(\mathbf U+\mathbf V)\pmod9.
 \end{aligned}
\end{align}
If $S\nsubseteq\{\pm\mathbf u,\pm\mathbf v,
\pm(\mathbf u+\mathbf v)\}$,
then, after interchanging $\mathbf U,\mathbf V$ if necessary,
$\mathbf u-\mathbf v\in S$.
There are therefore terms $\mathbf W,\mathbf Q$ of $A$ satisfying
$\mathbf W\bmod3=\mathbf u+\mathbf v$ and
$\mathbf Q\bmod3=\mathbf u-\mathbf v$.
The hypothesis applied to $\mathbf Q$ and terms of $A$
with residues $-\mathbf u,\mathbf v$ gives
$\mathbf Q-\mathbf U+\mathbf V\equiv\mathbf0\pmod9$.
The three-term subsequence $\mathbf W\mathbf Q\mathbf U$ of $A$
has sum zero modulo $3$, but 
$
 \mathbf W+\mathbf Q+\mathbf U
 \equiv3\mathbf U\not\equiv\mathbf0\pmod9,
$ 
a contradiction to the hypothesis. 
It follows that
$S\subseteq\{\pm\mathbf u,\pm\mathbf v,\pm(\mathbf u+\mathbf v)\}$
and hence $|A|\le2|S|\le12$.
This proves the first assertion.

If $|A|=12$, equality holds throughout, so $|S|=6$
and each element of $S$ is represented by exactly two terms of $A$.
The congruences~\eqref{eq.lem3922} give the stated representation,
with $\det(\mathbf U,\mathbf V)\not\equiv0\pmod3$.
This completes the proof. 
\end{proof}

\begin{proof}[\textbf{Proof of Theorem~\ref{thm:exact}(ii)}]
For a contradiction, suppose that $A$ is an admissible
$w$-bad sequence of length $dw_2(t)+3(d-1)$ in $\Z^2$ modulo $dt$.
Consider the following two types of nonempty subsequences $B$ of $A$:
\[
 \begin{aligned}
 \text{Type I:}\quad
 &|B|=1,\qquad \sigma(B)\equiv\mathbf0\pmod d;\\
 \text{Type II:}\quad
 &\sigma(B)\equiv\mathbf0\pmod d,\qquad
  \sigma(B)\not\equiv\mathbf0\pmod{d^2}.
 \end{aligned}
\]
For any pairwise disjoint subsequences $B_1,\ldots,B_m$ of $A$,
each of Type~I or Type~II, we claim that $m<w_2(t)$.
For Type~I and Type~II,
$\sigma(B_i)\not\equiv\mathbf0\pmod{dt}$
follows from admissibility and $w$-badness of $A$, respectively.
Hence, each $\mathbf c_i=\sigma(B_i)/d$ is nonzero modulo $t$,
thereby showing that $\mathbf c_1\cdots\mathbf c_m$
is admissible modulo $t$.
For every nonempty $I\subseteq[m]$, the $w$-badness of $A$
implies that, if $\sum_{i\in I}\mathbf c_i\equiv\mathbf0\pmod t$,
then
$\sigma\left(\prod_{i\in I}B_i\right)
=d\sum_{i\in I}\mathbf c_i
\equiv\mathbf0\pmod{d^2t^2}$,
and hence $\sum_{i\in I}\mathbf c_i\equiv\mathbf0\pmod{t^2}$.
Thus, $\mathbf c_1\cdots\mathbf c_m$ is $w$-bad modulo $t$ 
and 
\begin{equation}\label{eq.thm1.2ii}
 m<w_2(t).
\end{equation}

To contradict~\eqref{eq.thm1.2ii}, we now construct
$w_2(t)$ pairwise disjoint nonempty subsequences of $A$,
each of Type~I or Type~II.
Let $q$ denote the number of terms of $A$ that are zero modulo $d$.
These terms give $q$ pairwise disjoint one-term subsequences of Type~I, 
and hence, $q<w_2(t)$ by \eqref{eq.thm1.2ii}.
Note that all the other $|A|-q=dw_2(t)+3(d-1)-q$ terms of $A$
are nonzero modulo $d$.
We have two cases, depending on the value of $d$.

\textbf{Case 1: $d=2$.}
Among any seven remaining terms, three terms
$\mathbf x,\mathbf y,\mathbf z$
have the same nonzero residue modulo $2$.
Since
$2\mathbf x
=(\mathbf x+\mathbf y)+(\mathbf x+\mathbf z)
-(\mathbf y+\mathbf z)
\equiv\mathbf0\pmod4$
cannot be allowed, at least one of the three pair sums
$\mathbf x+\mathbf y,\mathbf x+\mathbf z,\mathbf y+\mathbf z$
is nonzero modulo $4$.
This gives a two-term subsequence of Type~II.

After deleting the $q$ terms of $A$ that are zero modulo $2$,
there remain $2w_2(t)+3-q$ terms.
If $j$ disjoint pairs of Type~II have been selected and removed
from these terms, the remaining subsequence of $A$ has length
$2w_2(t)+3-q-2j$.
For $0\le j<w_2(t)-q-1$, this length is at least $7+q\ge7$,
which means another pair of Type~II can be selected.
Thus, $w_2(t)-q-1$ disjoint pairs can be selected successively,
leaving $5+q$ terms of $A$.
Since $w_2(2)=5$ by Theorem~\ref{thm:exact}(i),
these remaining terms contain a further subsequence of Type~II. 
Thus, we obtain 
$$
 q+(w_2(t)-q-1)+1=w_2(t)
$$
pairwise disjoint nonempty subsequences of $A$,
each of Type~I or Type~II,
contradicting~\eqref{eq.thm1.2ii}.

\textbf{Case 2: $d=3$.}
By Corollary~\ref{cor:prime-exact}, we have $w_2(3)=9$. 
After deleting the $q$ terms of $A$ that are zero modulo $3$,
there remain $3w_2(t)+6-q$ terms, all nonzero modulo $3$.
If $w_2(t)-q=1$, the remaining length is
$9+2q\ge9=w_2(3)$ and these terms contain a subsequence of Type~II.
Together, they give $q+1=w_2(t)$ pairwise disjoint nonempty
subsequences of $A$, each of Type~I or Type~II,
contradicting~\eqref{eq.thm1.2ii}.

If $w_2(t)-q\ge2$, the remaining length is at least $12+2q$.
Whenever at least $13$ terms remain,
Lemma~\ref{lem:ternary-short} gives a subsequence of Type~II
with at most three terms.
By an argument similar to that in \textbf{Case~1}, we can select and remove
$w_2(t)-q-2$ such subsequences of Type~II from the remaining terms,
leaving at least $12+2q$ terms.
Let $J$ consist of any $12$ of these remaining terms.

\begin{itemize}
\item
If $J$ contains a subsequence $C$ of Type~II with $|C|\le3$,
then $|J\setminus C|\ge9=w_2(3)$.
Hence, $J\setminus C$ contains another subsequence $D$
of Type~II, disjoint from $C$.

\item
Otherwise, every nonempty subsequence of $J$ with at most three terms 
and sum zero modulo $3$ has sum zero modulo $9$,
which satisfies the hypothesis of Lemma~\ref{lem:ternary-short}.
From Lemma~\ref{lem:ternary-short}, we can get
two disjoint subsequences $C,D$ of $J$ such that
$C\equiv
 \mathbf U^{[2]}(\mathbf U+\mathbf V)(-\mathbf V)
 \pmod9
 $ and $D\equiv
 (-\mathbf U)^{[2]}(-\mathbf U-\mathbf V)\mathbf V
 \pmod9.
$
Since their respective sums are $3\mathbf U$ and $-3\mathbf U$
modulo $9$, and $\mathbf U\not\equiv\mathbf0\pmod3$,
both $C$ and $D$ are of Type~II.
\end{itemize}
In either subcase, since $J$ consists of remaining terms of $A$,
both $C$ and $D$ are disjoint from all previously selected
subsequences. We therefore obtain 
$$q+(w_2(t)-q-2)+2=w_2(t)$$ 
pairwise disjoint nonempty subsequences of $A$, each of Type~I or Type~II,
contradicting~\eqref{eq.thm1.2ii}.

As a result, there is no admissible $w$-bad sequence of length
$dw_2(t)+3(d-1)$ in $\Z^2$ modulo $dt$, and hence, $w_2(dt)\le dw_2(t)+3(d-1)$.
This completes the proof of Theorem~\ref{thm:exact}(ii). 
\end{proof}

\begin{proof}[\textbf{Proof of Theorem~\ref{thm:exact}(iii)}]
By Corollary~\ref{cor:prime-exact}, we have $w_2(p)=4p-3$
for every prime $p$.
By successive applications of Theorem~\ref{thm:exact}(ii) 
for any integers $k\geq 0$ and $\ell\geq 0$ and any prime $p$, we obtain
\[
 w_2(2^k3^\ell p)
 \le2^k3^\ell\bigl(w_2(p)+3\bigr)-3
 =4\cdot2^k3^\ell p-3.
\]
Together with the lower bound in Lemma~\ref{lem:axis},
this proves Theorem~\ref{thm:exact}(iii). 
\end{proof}

We end this section with the following remark and question. 

\begin{remark}
The lower bound $w_r(n)\ge2r(n-1)+1$ in Lemma~\ref{lem:axis}
is attained for 
\begin{itemize}
    \item $r=1$ and $n>1$ by Sun's result~\cite[Theorem~1.1]{S26},
    \item $r\geq 1$ and $n=2$  by Theorem~\ref{thm:exact}(i), and
    \item $r=2$ and $n=2^k3^\ell p$ by Theorem~\ref{thm:exact}(iii), where $p$ is a prime and $k,\ell\ge0$ are integers,
\end{itemize}
This leads to the following question. 

\begin{question}
Does $w_r(n)=2r(n-1)+1$ hold for every $r\ge1$ and $n>1$?
\end{question}
\end{remark}

\section{Proof of Theorem~\ref{thm:all-moduli}}\label{sec:all-moduli}

This section proves Theorem~\ref{thm:all-moduli}. 
We first bound $w_r(n)$ by the maximum length of a reduced sequence
and then estimate this length. 
Following Letzter and Morrison~\cite[Section~2]{LM24}, a sequence
$S$ over a finite abelian group $G$ is \emph{reduced} if
$|\Sigma(S)|>|\Sigma(S\setminus z)|$ for every $z\in S$,
where $S\setminus z$ is obtained from $S$ by deleting one copy of $z$. 
As stated in \cite[Observation~2.2]{LM24}, every subsequence of a reduced sequence is again reduced.
Let $h(G)$ be the largest length of a reduced sequence over $G$. 
For every reduced sequence $S$, we have $|\Sigma(S)|\ge|S|+1$, and hence $h(G)\le|G|-1$. 

The following lemma means that every admissible $w$-bad sequence 
can be split into two reduced subsequences. 

\begin{lemma}\label{lem:two-reduced}
Let $n>1$ and $r\ge1$. Every admissible $w$-bad sequence
$A$ in $\Z^r$ can be written as $A=BC$, where the residue
sequences of both $B$ and $C$ modulo $n$ are reduced.
Moreover, $$w_r(n)\le2h(\Z_n^r)+1.$$ 
\end{lemma}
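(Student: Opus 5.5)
The plan is to take $B$ minimal among subsequences of $A$ with the same residue subset-sum set as $A$, and then show that $C=A\setminus B$ is automatically reduced. The second step will combine Lemma~\ref{lem:two-complete-blocks}(i) with a coset observation.

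\textbf{Step 1: choice of $B$.} All subsequence sums are taken modulo $n$. I choose a subsequence $B$ of $A$ of minimal length subject to $\Sigma(B)=\Sigma(A)$; this exists since $A$ itself qualifies. Deleting a term never enlarges $\Sigma$. By minimality, $|\Sigma(B\setminus z)|<|\Sigma(B)|$ for every term $z$ of $B$, so the residue sequence of $B$ is reduced. Put $C=A\setminus B$.

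\textbf{Step 2: $C$ is reduced.} Suppose not. Then some term $\mathbf z$ of $C$ satisfies $\Sigma(C\setminus\mathbf z)=\Sigma(C)$; set $J_2=C\setminus\mathbf z$ and $\mathbf b=\mathbf z\bmod n$. From $\Sigma(J_2\mathbf z)=\Sigma(J_2)\cup(\Sigma(J_2)+\mathbf b)=\Sigma(J_2)$, translation by $\mathbf b$ maps the finite set $\Sigma(J_2)$ into itself, hence permutes it. Therefore $\Sigma(J_2)$ is a union of cosets of $H=\langle\mathbf b\rangle$. Since $\mathbf 0\in\Sigma(J_2)$, we get $H\subseteq\Sigma(J_2)$, and as $H=-H$ also $H\subseteq-\Sigma(J_2)$. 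Moreover $H\subseteq\Sigma(J_2)\subseteq\Sigma(A)=\Sigma(B)$. Thus the coset $H$ lies in $\Sigma(B)\cap(-\Sigma(J_2))$.

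\textbf{Step 3: contradiction.} The sequence $A=BJ_2\mathbf z$ is admissible, and by Step~2 it satisfies condition~(i) of Lemma~\ref{lem:two-complete-blocks} with $J_1=B$. Hence $A$ is not $w$-bad, contradicting the hypothesis. So the residue sequence of $C$ is reduced as well.

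\textbf{Step 4: the bound.} Every admissible $w$-bad sequence therefore has length $|B|+|C|\le2h(\Z_n^r)$, which gives $w_r(n)\le2h(\Z_n^r)+1$.

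The only delicate point is Step~2: Lemma~\ref{lem:two-complete-blocks}(i) requires a coset lying in $\Sigma(J_1)$ and in $-\Sigma(J_2)$ at the same time. The choice $\Sigma(B)=\Sigma(A)$ makes the first containment automatic, and the failure of reducedness of $C$ produces the coset $H$ itself.
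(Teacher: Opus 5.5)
Your proposal is correct and follows the paper's proof step for step. You take $B$ of minimal length with $\Sigma(B)=\Sigma(A)$. Non-reducedness of $C$ then forces $\Sigma(C\setminus\mathbf z)$ to be invariant under translation by $\mathbf b=\mathbf z\bmod n$, so $\langle\mathbf b\rangle\subseteq\Sigma(B)\cap(-\Sigma(C\setminus\mathbf z))$, and Lemma~\ref{lem:two-complete-blocks}(i) gives the contradiction.
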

\begin{proof}
Choose a subsequence $B$ of $A$ of shortest length with
$\Sigma(B)=\Sigma(A)$. Then its residue sequence modulo $n$ is reduced.
For $C=A\setminus B$, suppose that its residue sequence is not reduced.
Then $\Sigma(C\setminus\mathbf z)=\Sigma(C)$ for some specified
term $\mathbf z$ of $C$. 
By the identity
$\Sigma(C)=\Sigma(C\setminus\mathbf z)\cup
(\Sigma(C\setminus\mathbf z)+(\mathbf z\bmod n))$, 
we obtain $\Sigma(C)+(\mathbf z\bmod n)\subseteq\Sigma(C)$.
Since the two sets have the same cardinality, 
$\Sigma(C)+(\mathbf z\bmod n)=\Sigma(C)$.
Since $\mathbf0\in\Sigma(C)$, we have 
$H=\langle\mathbf z\bmod n\rangle\subseteq\Sigma(C)$.
Together with the facts that $-H=H$ and $\Sigma(C)\subseteq\Sigma(B)$, 
we obtain 
$$
 H\subseteq\Sigma(B)\cap\bigl(-\Sigma(C\setminus\mathbf z)\bigr).
$$
By the coset condition in Lemma~\ref{lem:two-complete-blocks}(i), 
we deduce that the admissible sequence 
$A=B(C\setminus\mathbf z)\mathbf z$ is not $w$-bad, a contradiction.
Thus, both residue sequences of $B$ and $C$ are reduced. 
Moreover, $|A|=|B|+|C|\le2h(\Z_n^r)$.
Since $A$ was arbitrary, $w_r(n)\le2h(\Z_n^r)+1$. 
\end{proof}

\begin{lemma}{\rm (\!\! \cite[Corollary~1.3]{A21})}\label{lem:arsovski-additive-basis}
Let $G$ be a finite abelian group of exponent $m$, and let $T$
be a sequence over $G$. If every subsequence of $T$ of length at least
$|T|-m\ln|G|$ generates $G$, then $\Sigma(T)=G$. 
\end{lemma}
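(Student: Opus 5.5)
The statement is Corollary~1.3 of \cite{A21}, so the plan is to reduce it to the main growth theorem there. I would only reprove the reduction, and say which combinatorial step carries the weight. Write $T=t_1\cdots t_N$. Then $\Sigma(T)=\{0,t_1\}+\cdots+\{0,t_N\}$ is an iterated sumset, and $\Sigma(t_1\cdots t_k)=X_{k-1}\cup(X_{k-1}+t_k)$ with $X_{k-1}=\Sigma(t_1\cdots t_{k-1})$.

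\emph{Step 1: translate the hypothesis.} The hypothesis says that every subsequence of length at least $|T|-m\ln|G|$ generates $G$. Equivalently, for every proper subgroup $K<G$, more than $m\ln|G|$ terms of $T$ lie outside $K$; otherwise the terms inside $K$ would form a non-generating subsequence of length at least $|T|-m\ln|G|$. In particular, no proper subgroup can absorb the sequence at any stage of a greedy process.

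\emph{Step 2: stabilizers and growth.} As in the proof of Lemma~\ref{lem:two-reduced}, $X\cup(X+x)=X$ forces $X+x=X$, so $x\in H:=\operatorname{Stab}(X)$. If $X\ne G$, then $H$ is a proper subgroup. By Step~1, many terms remain outside $H$, and each such term strictly enlarges $X$.

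\emph{Step 3: multiplicative decay of the complement.} Linear growth only gives $\Sigma(T)=G$ after about $|G|$ steps, so this is not enough. The goal is to show that the complement $M_k=G\setminus X_k$ shrinks multiplicatively. Concretely, I would aim to prove that a term $x$ outside $\operatorname{Stab}(X_k)$, suitably chosen among the available ones, satisfies
\[
|M_{k+1}|\le\Bigl(1-\tfrac1m\Bigr)|M_k|
\]
on average. With this, more than $m\ln|G|$ effective steps give $|M|<|G|e^{-\ln|G|}=1$, so $\Sigma(T)=G$. The order of every element divides $m$, so each coset of $\langle x\rangle$ has at most $m$ elements. In every coset of $\langle x\rangle$ that meets $X$ without being contained in it, adding $x$ removes at least one missing element, which is a fraction at least $1/m$ of that coset's missing part. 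The gap is the cosets disjoint from $X$, where adding $x$ removes nothing. To handle them I would average over all terms outside $H$, or use Kneser's theorem $|A+B|\ge|A+H'|+|B+H'|-|H'|$ applied to $X$ and $\{0,t_i,\dots\}$.

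\emph{Main obstacle.} Step~3 is where I expect difficulty: an individual step need not give multiplicative decay, for the reason just explained. If the averaging or Kneser route fails, I would fall back on Arsovski's own method. That method proves $\Sigma(T)=G$ by the polynomial method in the group algebra, showing that a suitable product $\prod_i(1+\chi(t_i)z)$ has no vanishing obstruction for any nontrivial character $\chi$. The quantity $m\ln|G|$ then arises from bounding, for each proper subgroup, how many factors can be killed. For this paper it suffices to quote \cite[Corollary~1.3]{A21} after checking that its hypotheses coincide with the subgroup-avoidance condition of Step~1.
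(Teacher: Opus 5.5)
Your bottom line is to quote \cite[Corollary~1.3]{A21} after checking, as in your Step~1 (which is correct), that its hypothesis is equivalent to every proper subgroup of $G$ missing more than $m\ln|G|$ terms of $T$; this is exactly what the paper does, since it states this lemma without proof. Steps~2--3 could not replace the citation, though: besides the per-step failure you already note, Kneser's theorem applied to $\{0,t_1\}+\cdots+\{0,t_N\}$ only gives the linear bound $|\Sigma(T)|\ge|H|\bigl(1+|\{i:t_i\notin H\}|\bigr)$ with $H=\operatorname{Stab}(\Sigma(T))$, which cannot force $\Sigma(T)=G$ from merely $m\ln|G|$ terms outside each proper subgroup, so the citation carries the full weight.
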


\begin{corollary}\label{lem:quadratic-reduced}
Let $G$ be a finite abelian group whose exponent divides $n\ge2$.
For $d=\lceil\log_2|G|\rceil$, we have
$$h(G)\le\frac{nd(d+1)}{2}.$$
\end{corollary}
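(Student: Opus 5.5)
Let $S$ be a reduced sequence over $G$ with $|S|=h(G)$. The goal is $|S|\le nd(d+1)/2$. The plan is to build a descending chain of subgroups generated by subsequences of $S$ that shrink quickly, and to use Lemma~\ref{lem:arsovski-additive-basis} at each stage to bound how many terms can live in each subgroup. Since the exponent $m$ of $G$ divides $n$, we have $m\le n$ and $m\ln|G|\le n\ln|G|$.

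The key observation is that a reduced sequence $T$ over a group $K$ cannot satisfy $\Sigma(T)=K$ unless it is very short in a specific sense. If $\Sigma(T)=K$ and $T$ is reduced, then deleting any term must shrink $\Sigma$. Applying Lemma~\ref{lem:arsovski-additive-basis} inside $K$ to $T\setminus z$ shows that if every subsequence of $T\setminus z$ of length at least $|T|-1-m\ln|K|$ still generated $K$, we would have $\Sigma(T\setminus z)=K$, a contradiction. Hence, for a reduced sequence $T$ over $K=\langle T\rangle$, there is a subsequence $T'$ with $|T\setminus T'|\le m\ln|K|+1$ that generates a proper subgroup $K'<K$. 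The same conclusion also follows directly from the lemma applied to $T$ itself when $\Sigma(T)\ne K$. Since every subsequence of a reduced sequence is reduced, the argument can be iterated on $T'$ inside $K'$.

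Starting from $K_0=\langle S\rangle\le G$ and iterating, we obtain a chain $K_0>K_1>\cdots>K_s=\{0\}$. The final stage costs nothing, because all terms of the remaining sequence are zero and a reduced sequence contains no zero term. At stage $i$ we remove at most $n\ln|K_i|+1$ terms. Each index $[K_i:K_{i+1}]$ is at least $2$, so $|K_i|\le 2^{d-i}$ and $s\le d$. Summing gives
\[
|S|\le\sum_{i=0}^{d-1}\bigl(n(d-i)\ln 2+1\bigr)=n\ln 2\,\frac{d(d+1)}{2}+d .
\]
It remains to check that this is at most $nd(d+1)/2$. This holds because $(1-\ln2)\,n(d+1)/2\ge(1-\ln 2)\cdot 2\cdot 2/2>0.6>\ldots$. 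More carefully, we need $(1-\ln2)n(d+1)/2\ge1$; with $n\ge2$ the left side is $(1-\ln 2)(d+1)\approx0.307(d+1)$, which is at least $1$ once $d\ge3$.

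I expect the main obstacle to be this bookkeeping for small $d$. A cleaner route is to avoid the ``$+1$'' altogether. If $\Sigma(T)\neq K$, then Lemma~\ref{lem:arsovski-additive-basis} directly gives a non-generating subsequence obtained by deleting at most $m\ln|K|$ terms. If $\Sigma(T)=K$, then deleting one term already makes $\Sigma$ proper, and we apply the lemma to that shorter sequence; the deleted term is charged within the same stage, and the bound $\lfloor n\ln|K_i|\rfloor+1\le n(d-i)$ holds since $n\ln 2<n-1$ for $n\ge 4$. For $n\in\{2,3\}$ I would check the $+1$ against the slack $n(d-i)(1-\ln2)\ge 2(1-\ln 2)>0.6$ together with integrality: the number of removed terms is an integer at most $\lfloor n(d-i)\ln2\rfloor+1\le n(d-i)$, which holds whenever $n(d-i)(1-\ln2)>1$ or can be verified directly in the few remaining cases where $n(d-i)\le3$. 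Summing $n(d-i)$ over $i=0,\dots,d-1$ then yields exactly $nd(d+1)/2$.
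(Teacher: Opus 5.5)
Your proof is correct, but it is organized differently from the paper's.

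The paper argues by induction on $|G|$. It assumes a reduced $S$ with $|S|>nd(d+1)/2$ and deletes one term $z$. It then applies the induction hypothesis to \emph{every} proper subgroup $H$ at once, showing that at least $nd(d+1)/2-nd(d-1)/2=nd>n\ln|G|$ terms of $T=S\setminus z$ lie outside $H$. So every subsequence of $T$ of length at least $|T|-n\ln|G|$ generates $G$. Lemma~\ref{lem:arsovski-additive-basis} then gives $\Sigma(T)=G=\Sigma(S)$, contradicting reducedness.

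You instead use the contrapositive of the same lemma constructively. From a reduced $T_i$ generating $K_i$, you extract a non-generating subsequence by deleting at most $\lfloor n\ln|K_i|\rfloor+1$ terms. You pass to the proper subgroup it generates and sum the costs along a chain of length at most $d$. This is essentially the paper's induction unrolled along a single chain of subgroups, and it is equally valid.

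The trade-off is that the deleted term $z$ appears in your version as an explicit $+1$ at every level. That is why your first real-valued comparison $n\ln2\cdot d(d+1)/2+d\le nd(d+1)/2$ is false for $n=2$, $d\le 2$. In the paper this term is absorbed by the strict inequality $nd>n\ln|G|$ with no bookkeeping.

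Your integrality repair is right, and it needs no case split between $n\ge4$ and $n\in\{2,3\}$. For every integer $x\ge1$ one has $\lfloor x\ln2\rfloor\le x\ln2<x$, hence $\lfloor x\ln2\rfloor+1\le x$. Taking $x=n(d-i)$ gives the per-stage bound at once.

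In exchange, your descent makes it transparent that the method actually gives $h(G)\le\sum_{j=1}^{d}\bigl(\lfloor nj\ln2\rfloor+1\bigr)$, roughly $\ln 2$ times the stated bound for large $d$. The paper's induction yields the same if the increment $nd$ is replaced by $\lfloor nd\ln2\rfloor+1$, but its write-up does not display this.
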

\begin{proof}
We argue by induction on $|G|$ for fixed $n$.
Since a reduced sequence cannot contain zero, the result is immediate
for the trivial group.
For a contradiction, suppose that $S$ is a reduced sequence over a
nontrivial group $G$ with $|S|>\frac{nd(d+1)}2$.
For any term $z$ of $S$, the sequence $T=S\setminus z$ then has length 
$|T|\ge\frac{nd(d+1)}2$.

For every proper subgroup $H<G$, the terms of $T$ lying in $H$
form a reduced sequence.
Since $\lceil\log_2|H|\rceil\le d-1$, the induction hypothesis
bounds their number by $\frac{nd(d-1)}2$.
Thus, the number of terms of $T$ outside $H$ is at least
$$
 |T|-\frac{nd(d-1)}2\ge nd>n\ln|G|.
$$
Consequently, every subsequence of $T$ of length at least
$|T|-n\ln|G|$ generates $G$.
Since the exponent of $G$ divides $n$, 
it turns out from Lemma~\ref{lem:arsovski-additive-basis} that 
$\Sigma(T)=G=\Sigma(S)$, a contradiction. 
This completes the induction and proves the corollary. 
\end{proof}

\begin{proof}[\textbf{Proof of Theorem~\ref{thm:all-moduli}}]
Taking $G=\Z_n^r$ in Lemma~\ref{lem:two-reduced}
and Corollary~\ref{lem:quadratic-reduced}, we obtain 
\begin{equation}\label{eq:coarse-w}
 w_r(n)\le2h(\Z_n^r)+1\le n\lceil r\log_2 n\rceil(\lceil r\log_2 n\rceil+1)+1.
\end{equation}

Note that the classical lower bound~\cite{H73} for the Erd\H{o}s-Ginzburg-Ziv constant $s(\Z_n^r)$ 
is 
\begin{equation}\label{eq:cube-egz}
 s(\Z_n^r)\ge2^r(n-1)+1.
\end{equation}
For fixed $n\ge2$, inequalities~\eqref{eq:coarse-w}
and~\eqref{eq:cube-egz} give
$$
\begin{aligned}
0\le \frac{w_r(n)}{s(\Z_n^r)}
&\le
\frac{n\lceil r\log_2 n\rceil
(\lceil r\log_2 n\rceil+1)+1}{2^r(n-1)+1}\\
&\le
\frac{n(r\log_2 n+1)(r\log_2 n+2)+1}{2^r(n-1)}\\
&=
\frac{n(\log_2 n)^2}{n-1}\cdot \frac{r^2}{2^r}
+\frac{3n\log_2 n}{n-1}\cdot \frac{r}{2^r}
+\frac{2n+1}{n-1}\cdot \frac{1}{2^r}\\
&=O_n\left(\frac{r^2}{2^r}\right). 
\end{aligned}
$$
It implies that $\displaystyle\lim_{r\to\infty}
w_r(n)/s(\Z_n^r)=0$. 

We next prove $w_r(n)<s(\Z_n^r)$ for the stated ranges of $r$. 
For $n=2$, using Theorem~\ref{thm:exact}(i), it is direct to verify that 
$w_r(2)=2r+1<2^r+1=s(\Z_2^r)$ for $r\ge3$, where the last equality is due to~\cite{H73}. 
For $n\ge3$, suppose that
$r\ge\lceil4\log_2(1+\log_2 n)+4\rceil$.
By the elementary inequality $\log_2 x\le x/2+1/4$ for $x>0$, we have 
$$
\begin{aligned}
(\log_2 n)\bigl(4\log_2(1+\log_2 n)+4\bigr)+2
\le 2(\log_2 n)^2+7\log_2 n+2 
<3(1+\log_2 n)^2.
\end{aligned}
$$
Since $(x\log_2 n+1)(x\log_2 n+2)/2^x$ is decreasing in $x$ for $x\ge3$,
we obtain
$$
\begin{aligned}
\frac{\lceil r\log_2 n\rceil(\lceil r\log_2 n\rceil+1)}{2^r}
\le \frac{\bigl[(\log_2 n)
(4\log_2(1+\log_2 n)+4)+2\bigr]^2}
{16(1+\log_2 n)^4}
<\frac{9}{16}<\frac{2}{3} \le 1-\frac{1}{n}.
\end{aligned}
$$
Therefore, we deduce from inequalities~\eqref{eq:coarse-w} and~\eqref{eq:cube-egz} 
that 
$$
\begin{aligned}
w_r(n)
\le n\lceil r\log_2 n\rceil(\lceil r\log_2 n\rceil+1)+1 
<2^r(n-1)+1
\le s(\Z_n^r).
\end{aligned}
$$
This completes the proof of Theorem~\ref{thm:all-moduli}. 
\end{proof}

\begin{remark}
For prime moduli, a more precise estimate for $w_r(p)$ can be obtained by combining
Lemma~\ref{lem:two-reduced} with the bound $h(\Z_p^r)\le(p-1)r(\log_2r+1)$ shown in 
\cite[Theorem~2.1]{LM24}, where $p$ is a prime and $r\ge1$. 
By an argument similar to that in the proof of Theorem~\ref{thm:all-moduli}, 
we specifically obtain 
$$w_r(p)\le2(p-1)r(\log_2r+1)+1$$ and 
$$\frac{w_r(p)}{s(\Z_p^r)}=O_p\left(\frac{r\log r}{2^r}\right)\to 0,~r\to\infty.$$ 
\end{remark}

\section*{Acknowledgments}
OpenAI’s ChatGPT (GPT-6 Astra and GPT-6.1 Sol) was used for improving the presentation of the paper. 
The research is supported by 
the Nanyang Technological University Research Grant under No. 04INS000047C230GRT01.

\end{document}